\documentclass{amsart}
\usepackage{amssymb}
\usepackage{amsfonts}
\usepackage{amsmath}
\usepackage{xcolor}

\newtheorem{theorem}{Theorem}[section]

\newtheorem{corollary}[theorem]{Corollary}

\newtheorem{definition}[theorem]{Definition}
\newtheorem{example}[theorem]{Example}

\newtheorem{lemma}[theorem]{Lemma}

\newtheorem{proposition}[theorem]{Proposition}
\newtheorem{remark}[theorem]{Remark}

\begin{document}
\title[gbwc operators and their factorization through KR-spaces]{Generalized 
$b$-weakly compact operators and their factorization through KR-spaces}
\author{Nabil Machrafi}
\address[N. Machrafi]{Mohammed V University in Rabat, Faculty of Sciences,
Department of Mathematics, CeReMAR Center, LAMA Laboratory, B.P. 1014 RP,
Rabat, Morocco.}
\email{n.machrafi@um5r.ac.ma, nmachrafi@gmail.com}
\author{Birol Altin}
\address[B. Altin]{Faculty of Science, Department of Mathematics, Gazi
University, Ankara 06500, Turkey.}
\email{birola@gazi.edu.tr}

\begin{abstract}
We investigate more closely the class of generalized $b$-weakly compact
operators on locally convex-solid Riesz spaces and we provide new sequential
and operator characterizations in relation with the subject. We introduce
explicitly the so-called KR-spaces in order to study the factorization
problem for generalized $b$-weakly compact operators by analogy with the
well-known factorization of $b$-weakly compact operators through KB-spaces.
\end{abstract}

\subjclass[2010]{ 46A40, 46B42, 47B60}
\keywords{(topologically) $b$-order bounded set, (generalized) $b$-weakly
compact operator, locally convex-solid Riesz space, Fr\'{e}chet lattice,
Banach lattice.}
\maketitle

\section{Introduction}

In this paper, we investigate more the class of generalized $b$-weakly
compact operators on locally convex-solid Riesz spaces. These are operators
that map topologically $b$-order bounded sets into relatively weakly compact
sets in a Banach space. They extend naturally the class of $b$-weakly
compact operators on a Banach lattice, which are those operators that map $b$%
-order bounded sets into relatively weakly compact ones. A subset of a Riesz
space $E$, with a separating order dual, is said to be $b$-order bounded if
it is order bounded as a subset of the order bidual $E^{\sim \sim }$ of $E.$
If $E$ is locally convex-solid, then a subset is said to be topologically $b$%
-order bounded if it is order bounded as a subset of the strong bidual $%
E^{\prime \prime }$ of $E.$ Note that, for the specific case of a Banach
lattice $E,$ one has $E^{\sim \sim }=E^{\prime \prime }$ and then
topologically $b$-order bounded sets and $b$-order bounded ones are the
same. This shows, as previously mentioned, that generalized $b$-weakly
compact operators extend well $b$-weakly compact operators to the general
setting of locally convex-solid Riesz spaces.

While the latter class of operators features a prominent place in the
literature, see for instance \cite{Alpay,AlpayAltin,Prop.b.Vec.,AAT,Altin},
the class of generalized $b$-weakly compact operators was recently
introduced and several results were succefully extended by the authors,
comparing with the well-known literature on $b$-weakly compact operators on
Banach lattices, to the Fr\'{e}chet lattice setting and the locally
convex-solid setting with suitable completeness, topological and order
conditions on the domain space; see \cite{EMA,Hac}. Here, we reach a
sequantial characterization of this class of operators on a locally
convex-solid Riesz space with no kind of completeness or additional
topological and order conditions; Theorem \ref{Th charc no conditions}.

Futhermore, to show the relevance of this class of operators, a new operator
characterization of those Fr\'{e}chet lattices that are bands in their
strong biduals is established; Theorem \ref{th charac KF}. This is the
natural extention of the well-known fact from Banach lattice theory that a
Banach lattice $E$ is a band in its bidual $E^{\prime \prime }$ iff $E$ is a
KB-space, or equivalently, the indentity operator on $E$ is $b$-weakly
compact. This led us to interestingly introduce and study in Section \ref%
{sec KR} the equivalent notion of a KB-space for locally convex-solid Riesz
spaces. These are spaces that we have termed KR-spaces (for
Kantorovich-Riesz spaces), in which topologically bounded, increasing and
positive nets are convergent.

Then, the first task that comes to mind after investigating intrinsically
this class of spaces is the factorization problem of generalized $b$-weakly
compact operators through KR-spaces, by analogy with the well-known
factorization theorem of $b$-weakly compact operators through KB-spaces,
which states that a continuous operator from a Banach lattice with order
continuous norm into a Banach space factors through a KB-space iff it is $b$%
-weakly compact; \cite[Proposition 2]{Altin}. We extend succefully this
factorization theorem to the class of generalized $b$-weakly compact
operators replacing, as one might expect, KB-spaces with KR-spaces; Theorem %
\ref{th factor KR}. Next, a curious question about this factorization result
arises. Can we factorize a generalized $b$-weakly compact operator on a
locally convex-solid Riesz space through a KB-space? We give partial
affirmative answers to this question, namely, if the domain space is a
Dedekind-complete order continuous normed Riesz space; Theorem \ref{th fact
ocn}, or the operator enjoys a curious inverse boundedness proprety that we
introduced here and called the sequential positive inverse boundedness
property; Definition \ref{def SPIB} and Theorem \ref{th fact SPIB}.

Finally, throughout the paper we provide illustrating examples of the key
notions specific for the extending setting of locally convex-solid Riesz
spaces, and we look at the relationship between generalized $b$-weakly,
order weakly and weakly compact operators from a locally convex-solid Riesz
space into a Banach space.

Throughout the paper, locally convex spaces are Hausdorff and Riesz spaces
are archimedean. The term operator refers to a linear mapping between two
real vector spaces. The topological dual $E^{\prime }$ of a locally convex
space $E$ refers to the strong dual, i.e., $E^{\prime }$ endowed with the
strong topology $\beta \left( E^{\prime },E\right) .$ We refer the reader
for more details and unexplained terminology and notations on locally solid
Riesz spaces and Banach lattices to \cite{LSRS,POS.OPR,MN}.

\section{Examples and some results}

Let us note first that if a Riesz space $E$ has a separating order dual $%
E^{\sim }$, then $E$ may be viewed as a subset of the order bidual $E^{\sim
\sim }$ and then, a subset $A\subset E$ will be called $b$-order bounded if
its order bounded as a subset of $E^{\sim \sim }.$ A typical situation in
which a Riesz space $E$ has a separating order dual is when $E$ is locally
convex-solid, as in this case one has $E^{\prime }\subset E^{\sim }.$ Note
that, the natural embedding $E\hookrightarrow E^{\prime \prime }$ is in this
case a lattice embedding. Hence, another $b$-order boundedness type arises
in this setting, namely, the so-called topologically $b$-order bounded sets.
From \cite{Hac}, a subset $A\subset E$ is said to be topologically $b$-order
bounded if it is order bounded as a subset of the strong bidual $E^{\prime
\prime }.$ For more details about this notion and its relationship with $b$%
-order boundedness, we refer the reader to \cite{EMA,Hac}.

Let us recall now from \cite{Hac} that an operator $T:E\rightarrow X$ from a
locally convex-solid Riesz space into a Banach space is said to be
generalized $b$-weakly compact (gbwc from now on) if it maps topologically $%
b $-order bounded sets in $E$ to relatively weakly compact ones in $X.$

We present the following examples, specific for the setting of (non-Banach)
locally convex-solid Riesz spaces, which show that the class of weakly
compact operators (i.e., operators that map topologically bounded sets to
relatively weakly compact ones) from $E$ into $X$ may be, in general,
properly included in the class of gbwc operators from $E$ into $X$.

\begin{example}
\label{exp gbwc 1}Let $E=\left( l_{1},\left\vert \sigma \right\vert \left(
l_{1},c_{0}\right) \right) $ and $I:\left( l_{1},\left\vert \sigma
\right\vert \left( l_{1},c_{0}\right) \right) \rightarrow \left(
l_{1},\left\Vert .\right\Vert _{1}\right) $ be the identity operator. Then, $%
E^{\prime \prime }=E,$ where equality here is that of semireflexivity.
Hence, topologically $b$-order bounded subsets of $E$ are the same as order
bounded ones. Since $l_{1}$ has an order continuous norm, then $I$ is order
weakly compact, or gbwc. Now, the closed unit ball $B_{l_{1}}$ of $l_{1}$ is 
$\sigma \left( l_{1},c_{0}\right) $-bounded. As $\left\vert \sigma
\right\vert \left( l_{1},c_{0}\right) $ is consistent with the duality $%
\left\langle c_{0},l_{1}\right\rangle ,$ then $\left\vert \sigma \right\vert
\left( l_{1},c_{0}\right) $-bounded subsets of $l_{1}$ and $\sigma \left(
l_{1},c_{0}\right) $-bounded ones are the same. Hence, $B_{l_{1}}$ is $%
\left\vert \sigma \right\vert \left( l_{1},c_{0}\right) $-bounded. But, $%
B_{l_{1}}$ is not relatively weakly compact in $\left( l_{1},\left\Vert
.\right\Vert _{1}\right) .$ This shows that the operator $I$ fails to be
weakly compact.
\end{example}

Note that, the topology $\left\vert \sigma \right\vert \left(
l_{1},c_{0}\right) $ is not metrizable. In fact, if it were the case, as $%
\left\vert \sigma \right\vert \left( l_{1},c_{0}\right) $ is complete (see 
\cite[Proposition 0.6.2]{Wnuk}) then $\left\vert \sigma \right\vert \left(
l_{1},c_{0}\right) $ would be normable as the space $l_{1}$ admits at most
one Fr\'{e}chet lattice topology. But, this implies that $c_{0}$ has an
order unit (see \cite[Proposition 1.3]{Peress}), which is a contradiction.

The following example presents the case of lack of completeness.

\begin{example}
Let the Riesz space $c_{00},$ of eventually null real sequences, be endowed
with the norm $\left\Vert .\right\Vert _{1}$ of $l_{1}$, and consider the
injection $i:c_{00}\rightarrow \left( l_{1},\left\Vert .\right\Vert
_{1}\right) .$ Since $c_{00}^{\prime \prime }=l_{1}^{\prime \prime
}=l_{\infty }^{\prime },$ then each topologically $b$-order bounded set in $%
c_{00}$ is so in $l_{1}.$ As the latter space is a Banach lattice with
Property $\left( b\right) ,$ then their (topologically) $b$-order bounded
sets and order bounded ones are the same. Since $l_{1}$ has an order
continuous norm, then their order intarvals are weakly compact. It follows
then that the injection $i$ is gbwc. Now, the closure of the unit ball $%
B_{c_{00}}$ of $c_{00}$ in $l_{1}$ is $B_{l_{1}}$, which fails to be weakly
compact. This shows that the injection $i$ fails to be weakly compact.
\end{example}

On the other hand, if $\left( E,\tau \right) $ is a semireflexive locally
convex-solid Riesz space, then each topologically bounded subset of $E$ is
relatively weakly compact, and hence each continuous operator (in
particular, each continuous gbwc operator) $T:E\rightarrow X$ into an
arbitrary Banach space is weakly compact. Note that, $E$ is
Dedekind-complete and it follows from \cite[Theorem 22.4]{LSRS} that both
the topologies $\tau $ and $\beta \left( E^{\prime },E\right) $ are Lebesgue.

These facts motivate the following result.

\begin{theorem}
\textbf{\label{th gbwc wc}}Let $\left( E,\tau \right) $ be a
Dedekind-complete locally convex-solid Riesz space such that both the
topologies $\tau $ and $\beta \left( E^{\prime },E\right) $ are Lebesgue.
Then, each continuous gbwc operator $T$ from $E$ into a Banach space $X$ is
weakly compact.
\end{theorem}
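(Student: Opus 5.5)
The plan is to show that the bitranspose $T''$ maps $E''$ into $X$, and then deduce weak compactness by a Gantmacher-type argument for locally convex spaces.

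\textbf{Reduction.} Since $T$ is continuous, $T':X'\rightarrow E'$ and $T'':E''\rightarrow X''$ are defined. Let $B\subset E$ be topologically bounded. Its polar $B^{\circ }$ is a $\beta (E^{\prime },E)$-neighbourhood of zero, so the bipolar $B^{\circ \circ }\subset E''$ is equicontinuous on the strong dual. Hence $B^{\circ \circ }$ is $\sigma (E'',E')$-compact. The map $T''$ is $\sigma (E'',E')$--$\sigma (X'',X')$ continuous. So if $T''(E'')\subset X$, then $T''(B^{\circ \circ })$ is a weakly compact subset of $X$ containing $T(B)$, and $T$ is weakly compact. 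Thus it suffices to prove $T''x''\in X$ for every $0\leq x''\in E''$.

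\textbf{Order structure of $E''$.} Because $\tau $ is Lebesgue, every continuous functional on $E$ is order continuous, so $E'\subset E_{n}^{\sim }$. Because $\beta (E',E)$ is Lebesgue, $E''\subset (E')_{n}^{\sim }$. I then want $E$ to be order dense in $E''$, or at least in the band of $(E')^{\sim }$ in which $E''$ sits. For this I would invoke Nakano's theorem: $E$ is order dense in $(E_{n}^{\sim })_{n}^{\sim }$. I would combine it with Dedekind completeness of $E$ to pass to $(E')_{n}^{\sim }$, using that $E'$ is an ideal in $E^{\sim }$. Once $E$ is order dense, archimedeanity gives
$$x''=\sup \{x\in E:0\leq x\leq x''\}.$$
So there is an increasing net $(x_{\alpha })$ in $E^{+}$ with $x_{\alpha }\uparrow x''$ in $E''$. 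Since $E''$ is an ideal in $(E')^{\sim }$, this supremum is computed pointwise on $(E')^{+}$. This order-density step is the one I expect to be the main obstacle: one must check that the relevant suprema in $E''$, in $(E')_{n}^{\sim }$ and in $(E')^{\sim }$ agree.

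\textbf{Conclusion.} The net $(x_{\alpha })$ lies in $[0,x'']\cap E$, so it is topologically $b$-order bounded. Since $T$ is gbwc, $\{Tx_{\alpha }\}$ is relatively weakly compact, and we may choose a subnet with $Tx_{\alpha }\rightarrow y$ weakly in $X$. Now take $f\in X'$ and write $T'f=g^{+}-g^{-}$ in $E'$. By the pointwise suprema, $x_{\alpha }(g^{\pm })\rightarrow x''(g^{\pm })$. Therefore
$$(T''x'')(f)=x''(T'f)=\lim_{\alpha }f(Tx_{\alpha })=f(y).$$
This gives $T''x''=y\in X$. By linearity $T''(E'')\subset X$, and the reduction step finishes the proof.
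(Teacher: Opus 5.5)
Your proof is correct and has the same skeleton as the paper's. Both arguments use Alaoglu--Bourbaki on the bipolar $B^{\circ\circ}$ and the weak$^*$-continuity of $T''$ to reduce the theorem to showing $T''(E'')\subseteq X$.

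The difference is in how that middle step is obtained. The paper quotes two results:
\begin{enumerate}
\item $T''(B(E))\subseteq X$ for a gbwc $T$, where $B(E)$ is the band generated by $E$ in $E''$ ([Hac, Theorem 3.18]);
\item $B(E)=E''$ when $\beta(E',E)$ is Lebesgue ([LSRS, Theorem 22.3]).
\end{enumerate}
You prove the inclusion directly. You write $x''\geq 0$ as the supremum of an increasing net in $E\cap[0,x'']$, and identify $T''x''$ with the weak limit of a subnet of $(Tx_\alpha)$. This makes the argument self-contained. The price is the order-density fact you flagged.

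On that step, Nakano's theorem for $(E_n^{\sim})_n^{\sim}$ does not by itself give what you need, because $(E')_n^{\sim}$ can be strictly larger. Take $E=\ell_\infty$ with the coordinatewise topology, which satisfies all the hypotheses of the theorem. Then $E'=c_{00}$ and $E''=(E')_n^{\sim}=\mathbb{R}^{\mathbb{N}}$, whereas $(E_n^{\sim})_n^{\sim}=\ell_\infty$.

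A cleaner justification runs as follows.
\begin{itemize}
\item Since $E$ is Dedekind complete and $\tau$ is Lebesgue, $E$ is an ideal of $E''$ ([LSRS, Theorem 22.1]). The paper uses this in exactly this form in the proof of Theorem \ref{th fact ocn}.
\item Since $\beta(E',E)$ is Lebesgue, $E^{d}=\{0\}$ in $E''$; this is just the statement $B(E)=E''$.
\item An ideal with trivial disjoint complement is order dense: for $0<x''$ there is some $x\in E^{+}$ with $0<x''\wedge x$, and $x''\wedge x\in E$.
\end{itemize}

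Your worry about where the suprema are computed is then harmless. Suprema in an ideal agree with those in the ambient space. An increasing net bounded above in $(E')^{\sim}$ has its supremum computed pointwise on $(E')^{+}$.
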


\begin{proof}
Let $A$ be a $\tau $-bounded subset of $E.$ From Mackey theorem, $A$ is
weakly bounded. Hence, the polar $A^{\circ }$ of $A$ is a $\beta \left(
E^{\prime },E\right) $-neighborhood of zero in $E^{\prime }.$ It follows
from Alaoglu-Bourbaki theorem that the polar $A^{\circ \circ }:=\left(
A^{\circ }\right) ^{\circ }$ of $A^{\circ }$ with respect the duality $%
\left\langle E^{\prime },E^{\prime \prime }\right\rangle $ is $\sigma \left(
E^{\prime \prime },E^{\prime }\right) $-compact in $E^{\prime \prime }.$
Since $A\subset A^{\circ \circ },$ then $A$ is relatively $\sigma \left(
E^{\prime \prime },E^{\prime }\right) $-compact in $E^{\prime \prime }.$
Now, since $T$ is gbwc, then by \cite[Theorem 3.18]{Hac} $T^{\prime \prime
}\left( B\left( E\right) \right) \subset X$, where $B\left( E\right) $ is
the band generated by $E$ in $E^{\prime \prime }.$ As $\beta \left(
E^{\prime },E\right) $ is Lebesgue, then by \cite[Theorem 22.3]{LSRS} $%
B\left( E\right) =E^{\prime \prime }.$ Therefore, $T\left( A\right)
=T^{\prime \prime }\left( A\right) \subset X$. As $T^{\prime \prime
}:E^{\prime \prime }\rightarrow X^{\prime \prime }$ is weak*-to-weak*
continuous and $\sigma \left( X^{\prime \prime },X^{\prime }\right) $
induces $\sigma \left( X,X^{\prime }\right) $ on $X,$ it can be shown easily
that $T\left( A\right) $ is relatively weakly compact in $X$. This shows
that $T$ is weakly compact as desired.
\end{proof}

\begin{remark}
\begin{enumerate}
\item We cannot drop the condition $\beta \left( E^{\prime },E\right) $ is
Lebesgue, as the example of the identity operator $I:l_{1}\rightarrow l_{1}$
already shows in the Banach lattice setting.

\item On the other hand, there exists by \cite{Alpay} a Banach lattice $E$
such that $E^{\prime }$ has order continuous norm and a $b$-weakly compact
operator $T:E\rightarrow X$ which is not weakly compact. This shows that the
condition $\tau $ is Lebesgue cannot be droped in the above theorem.

\item Furthermore, Example \ref{exp gbwc 1} shows that the continuity of $T$
is essential in the above theorem.
\end{enumerate}
\end{remark}

The following corollary is now an immediate consequence of the above theorem.

\begin{corollary}
If $E$ be a Dedekind-complete normed Riesz space such that both the norms of 
$E$ and $E^{\prime }$ are order continuous, then each continuous gbwc
operator $T$ from $E$ into a Banach space $X$ is weakly compact.
\end{corollary}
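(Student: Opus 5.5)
The plan is to deduce the corollary directly from Theorem \ref{th gbwc wc}, by checking its hypotheses in the normed setting. Let $E$ be a Dedekind-complete normed Riesz space whose norm $\left\Vert .\right\Vert$ is order continuous. Then $\left( E,\left\Vert .\right\Vert \right)$ is a locally convex-solid Riesz space. The norm topology $\tau$ is Lebesgue, since order continuity of the norm means exactly that $x_{\alpha }\downarrow 0$ implies $\left\Vert x_{\alpha }\right\Vert \rightarrow 0$. So $E$ is Dedekind-complete and $\tau$ is Lebesgue, and it remains to check the hypothesis on $\beta \left( E^{\prime },E\right)$.

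The key step is to identify the strong topology $\beta \left( E^{\prime },E\right)$ on $E^{\prime }$ with the dual norm topology. The bounded subsets of the normed space $E$ are exactly the norm-bounded sets. Hence the polars of bounded sets, which form a base of $\beta \left( E^{\prime },E\right)$-neighborhoods of zero, are the multiples of the dual unit ball. Therefore $\beta \left( E^{\prime },E\right)$ coincides with the topology of the dual norm on $E^{\prime }$, which is a Banach lattice. By hypothesis the norm of $E^{\prime }$ is order continuous, so $\beta \left( E^{\prime },E\right)$ is Lebesgue.

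All hypotheses of Theorem \ref{th gbwc wc} are now satisfied. It follows that every continuous gbwc operator $T:E\rightarrow X$ into a Banach space is weakly compact.

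There is no genuine obstacle here. The only point deserving care is the identification of $\beta \left( E^{\prime },E\right)$ with the dual norm topology. This holds even though $E$ need not be complete, because it uses only that bounded sets in a normed space are norm bounded.
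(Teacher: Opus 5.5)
Your proposal is correct and follows the paper's route exactly: the paper states this corollary as an immediate consequence of Theorem \ref{th gbwc wc}, and you have simply made explicit the hypothesis check. That check consists of two points. First, order continuity of the norm of $E$ is the Lebesgue property of $\tau$. Second, $\beta(E',E)$ is the dual norm topology, because $\sigma(E,E')$-bounded subsets of a normed space are norm bounded by uniform boundedness on the Banach space $E'$, which needs no completeness of $E$.
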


Let us recall that an operator $T:E\rightarrow X$ is said to be order weakly
compact (owc from now on) if it maps order bounded sets (equivalently, order
intervals) of $E$ into relatively weakly compact sets of $X.$

Also, in the general case of lack of completeness for the domain space, the
class of gbwc operators from $E$ into $X$ may be properly included in that
of owc operators from $E$ into $X.$

\begin{example}
Consider the injection $i:\left( c_{00},\left\Vert .\right\Vert _{\infty
}\right) \rightarrow \left( c_{0},\left\Vert .\right\Vert _{\infty }\right) $%
. Since the closed unit ball $B_{c_{00}}$ of $c_{00}$ is norm-bounded in $%
l_{\infty }=c_{00}^{\prime \prime }$ then it is order bounded in $%
c_{00}^{\prime \prime },$ that is, $B_{c_{00}}$ is topologically $b$-order
bounded in $c_{00}.$ But, since the norm-closure of $B_{c_{00}}$ is $%
B_{c_{0}},$ the closed unit ball of $c_{0}$, then $i$ is not gbwc. Now, if $%
\left[ -x,x\right] _{c_{00}}$ is an order interval in $c_{00},$ then the
norm-closure of $\left[ -x,x\right] _{c_{00}}$ in $c_{0}$ is the order
interval $\left[ -x,x\right] _{c_{0}}$ of $c_{0}.$ Since the norm of the
latter space is order continuous, then the order intervals of $c_{0}$ are
weakly compact. This shows that $i$ is well order weakly compact.
\end{example}

Recall from \cite{LAB} that a locally solid Riesz space $\left( E,\tau
\right) $ is said to have BOB-property if for every net $\left( x_{\alpha
}\right) \subset E,$ whenever $0\leq x_{\alpha }\uparrow $ and $\left(
x_{\alpha }\right) $ is topologically bounded, then it is order bounded,
that is, there exists an element $x$ in $E$ with $0\leq x_{\alpha }\leq x$;
see \cite{EMA,LAB}.

The following proposition gives us a sufficent condition for which the two
classes of gbwc operators and owc operators from $E$ into $X$ are the same.

\begin{proposition}
Let $E$ be a locally convex-solid Riesz space with the BOB-property and $X$
be a Banach space. Then, an operator $T:E\rightarrow X$ is gbwc iff it is
owc.
\end{proposition}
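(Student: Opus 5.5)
Order bounded sets are always topologically $b$-order bounded, and under the BOB-property the converse holds as well. Once the two families of sets coincide, the two operator classes coincide, since each is defined by sending one family into relatively weakly compact sets.

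\emph{Easy direction (gbwc implies owc).} Let $A\subset [-x,x]$ with $x\in E^{+}$. The natural embedding $E\hookrightarrow E^{\prime \prime }$ is a lattice embedding, so $A\subset [-x,x]$ also holds in $E^{\prime \prime }$. Hence $A$ is topologically $b$-order bounded, and a gbwc operator maps it to a relatively weakly compact set. So every gbwc operator is owc; this uses nothing about $E$.

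\emph{Converse (owc implies gbwc).} Let $A\subset E$ be topologically $b$-order bounded, say $|a|\leq \xi $ in $E^{\prime \prime }$ for all $a\in A$, with some $\xi \in (E^{\prime \prime })^{+}$. Consider the family $\mathcal{D}$ of finite suprema $|a_{1}|\vee \cdots \vee |a_{n}|$ with $a_{i}\in A$. Indexed by finite subsets of $A$ ordered by inclusion, $\mathcal{D}$ is an increasing net of positive elements of $E$, and every member satisfies $0\leq d\leq \xi $ in $E^{\prime \prime }$.

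The key step is to show that $\mathcal{D}$ is $\tau $-bounded in $E$. Since $\tau $ is locally convex-solid, it has a base of solid convex neighborhoods of the form $V=\{x:p(|x|)\leq 1\}$, where $p$ ranges over a saturating family of Riesz seminorms. By Mackey's theorem it suffices to check that $\mathcal{D}$ is weakly bounded, i.e.\ that $\sup_{d\in \mathcal{D}}|f(d)|<\infty $ for every $f\in E^{\prime }$. For $f\in E^{\prime }$ we have $|f|\in E^{\prime }$, because $E^{\prime }$ is an ideal in $E^{\sim }$ for locally solid spaces. Therefore
\[
|f(d)|\leq |f|(d)=\langle d,|f|\rangle \leq \xi (|f|)<\infty ,
\]
where the last inequality uses that $0\leq d\leq \xi $ in $E^{\prime \prime }$ and that the order of $E^{\prime \prime }$ is the pointwise order on $(E^{\prime })^{+}$. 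This requires $E^{\prime \prime }$, as the strong dual of $E^{\prime }$, to be ordered as a subspace of $(E^{\prime })^{\sim }$, which is the standard setting of \cite{Hac}. So $\mathcal{D}$ is a topologically bounded, positive, increasing net.

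The BOB-property now gives $x\in E$ with $0\leq d\leq x$ for all $d\in \mathcal{D}$. In particular $|a|\leq x$ for all $a\in A$, so $A\subset [-x,x]$ in $E$. Since $T$ is owc, $T(A)$ is relatively weakly compact, and hence $T$ is gbwc.

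The main obstacle is justifying that topological $b$-order boundedness yields $\tau $-boundedness of the up-directed family. This rests on $E^{\prime }$ being an ideal of $E^{\sim }$, so that $|f|\in E^{\prime }$, and on the order of $E^{\prime \prime }$ being pointwise on $(E^{\prime })^{+}$. Once these are in place, Mackey's theorem and the BOB-property finish the argument.
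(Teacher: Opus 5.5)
Your proof is correct and follows the paper's route. The paper simply invokes \cite[Proposition 1]{EMA}, namely that under the BOB-property topologically $b$-order bounded sets are order bounded, so the two defining families of sets coincide; you instead give a self-contained proof of that fact, arguing as the paper does elsewhere: topological boundedness (as in \cite[Proposition 3.8]{Hac}, which you obtain from $|f|\in E^{\prime}$ and Mackey's theorem), then finite suprema and the BOB-property, while the easy inclusion needs only positivity of $E\hookrightarrow E^{\prime\prime}$.
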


\begin{proof}
Follows from \cite[Proposition 1]{EMA}.
\end{proof}

\begin{corollary}
Let $E$ and $X$ be respectively a locally convex-solid Riesz space and a
Banach space such that the canonical embedding $E\hookrightarrow E^{\prime
\prime }$ is continuous. Then, for each operator $T:X\rightarrow E,$ the
adjoint $T^{\prime }:E^{\prime }\rightarrow X^{\prime }$ is gbwc iff it is
owc.
\end{corollary}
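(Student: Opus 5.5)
The plan is to reduce the corollary to the preceding proposition, applied with the locally convex-solid Riesz space $E'$ (the strong dual of $E$, a locally convex-solid Riesz space under $\beta(E',E)$) in place of $E$ and with $X'$ as the target Banach space. Once we know that $E'$ has the BOB-property, the proposition says directly that every operator $E'\rightarrow X'$, in particular $T'$, is gbwc iff it is owc. So the whole proof comes down to verifying the BOB-property for $(E',\beta(E',E))$ under the assumption that $E\hookrightarrow E''$ is continuous.

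To check the BOB-property, I take a net $0\leq f_{\alpha }\uparrow $ in $E'$ that is $\beta(E',E)$-bounded. By definition of the strong topology, this means the net is uniformly bounded on every $\beta(E',E)$-bounded set, in particular on every order interval $[0,x]$ of $E$; order intervals are topologically bounded in the locally solid space $E$, and hence weakly bounded. So for each $x\in E_{+}$ we have $\sup_{\alpha }f_{\alpha }(x)<\infty $, and since the net is increasing, $f(x):=\sup_{\alpha }f_{\alpha }(x)=\lim_{\alpha }f_{\alpha }(x)$ exists. By additivity of the limit on $E_{+}$, $f$ extends to a positive linear functional on $E$, and $0\leq f_{\alpha }\leq f$ in $E^{\sim }$. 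What remains is to show that $f$ is continuous, i.e.\ $f\in E'$.

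I expect this continuity to be the main obstacle, and it is exactly where the continuity of $E\hookrightarrow E''$ enters. That continuity means: for every $\beta(E',E)$-bounded set $B\subset E'$, $\sup_{g\in B}|g(x)|\leq 1$ on some $\tau $-neighborhood of zero. In other words, strongly bounded subsets of $E'$ are equicontinuous. Apply this to the bounded set $B=\{f_{\alpha }\}$: there is a solid $\tau $-neighborhood $V$ of zero with $|f_{\alpha }(x)|\leq 1$ for all $\alpha $ and $x\in V$. Since $V$ is solid and each $f_{\alpha }$ is positive, for $x\in V$ we have $|f(x)|\leq f(|x|)=\lim_{\alpha }f_{\alpha }(|x|)\leq 1$, with $|x|\in V$. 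Hence $f$ is bounded on $V$ and therefore continuous, which gives $f\in E'$ and proves the BOB-property.

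Finally, with the BOB-property of $E'$ in hand, I invoke the previous proposition for $T':E'\rightarrow X'$, where $X'$ is a Banach space, and conclude that $T'$ is gbwc iff it is owc. If the equicontinuity step fails in the generality stated, a fallback is to work directly with \cite[Proposition 1]{EMA}.
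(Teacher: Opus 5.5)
Your proposal is correct and follows the paper's argument: the paper also applies the preceding proposition to $T':E'\rightarrow X'$, and simply cites \cite[Theorem 6]{EMA} for the BOB-property of $E'$ when $E\hookrightarrow E''$ is continuous, which your equicontinuity argument (strongly bounded subsets of $E'$ are equicontinuous, so the pointwise limit $f$ is continuous) proves directly. One wording slip: $\beta(E',E)$-boundedness means uniform boundedness on $\sigma(E,E')$-bounded subsets of $E$, not on ``$\beta(E',E)$-bounded sets''; this is harmless, since you only use $\sup_{\alpha}f_{\alpha}(x)<\infty$ for each $x\in E$.
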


\begin{proof}
Follows from the above proposition and \cite[Theorem 6]{EMA}.
\end{proof}

\begin{proposition}
Let $E$ and $X$ be respectively a locally convex-solid Riesz space and a
Banach space. Then, for each operator $T:E\rightarrow X,$ if $T^{\prime
\prime }:E^{\prime \prime }\rightarrow X^{\prime \prime }$ is owc then $%
T:E\rightarrow X$ is gbwc (hence owc).
\end{proposition}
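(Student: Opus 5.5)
Let $A\subset E$ be topologically $b$-order bounded. By definition there is $0\le z\in E^{\prime\prime}$ with $A\subset[-z,z]_{E^{\prime\prime}}$, where $E$ is viewed inside $E^{\prime\prime}$ via the canonical lattice embedding. Since $T^{\prime\prime}$ is owc, the set $T^{\prime\prime}([-z,z])$ is relatively weakly compact in $X^{\prime\prime}$, i.e.\ relatively $\sigma(X^{\prime\prime},X^{\prime\prime\prime})$-compact. In particular $T^{\prime\prime}(A)$ is relatively $\sigma(X^{\prime\prime},X^{\prime\prime\prime})$-compact in $X^{\prime\prime}$.

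Next I use that $T^{\prime\prime}$ extends $T$: for $x\in E$ we have $T^{\prime\prime}(x)=T(x)$ under the canonical embeddings $E\hookrightarrow E^{\prime\prime}$ and $X\hookrightarrow X^{\prime\prime}$. This holds because $\langle T^{\prime\prime}x,f\rangle=\langle x,T^{\prime}f\rangle=f(Tx)$ for $f\in X^{\prime}$. Hence $T(A)=T^{\prime\prime}(A)\subset X$. The set $T(A)$ is contained in the $\sigma(X^{\prime\prime},X^{\prime\prime\prime})$-compact set $K:=\overline{T^{\prime\prime}([-z,z])}^{\,w}$.

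The main point is to pass from relative weak compactness in $X^{\prime\prime}$ to relative weak compactness in $X$. Since $X$ is norm closed and convex in $X^{\prime\prime}$, it is $\sigma(X^{\prime\prime},X^{\prime\prime\prime})$-closed by Mazur's theorem. Therefore the weak closure of $T(A)$ in $X^{\prime\prime}$ is a weakly compact subset of $K$ lying in $X$. By Hahn--Banach every $f\in X^{\prime}$ extends to an element of $X^{\prime\prime\prime}$, and conversely every element of $X^{\prime\prime\prime}$ restricts to $X$ as an element of $X^{\prime}$. So $\sigma(X^{\prime\prime},X^{\prime\prime\prime})$ restricted to $X$ is exactly $\sigma(X,X^{\prime})$. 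Consequently the weak closure of $T(A)$ in $X$ is $\sigma(X,X^{\prime})$-compact, and $T$ is gbwc.

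Finally, gbwc implies owc in general, since every order interval $[-x,x]$ of $E$ is contained in the order interval $[-x,x]_{E^{\prime\prime}}$ and is therefore topologically $b$-order bounded. This gives the parenthetical claim. The only mildly delicate step is the identification of the weak topologies on $X$, and it is standard.
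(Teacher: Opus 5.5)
Your proof is correct and follows the paper's argument: $A$ lies in an order interval of $E^{\prime\prime}$, so $T(A)=T^{\prime\prime}(A)$ is relatively weakly compact in $X^{\prime\prime}$, and this transfers to $X$ because $X$ sits isomorphically as a closed subspace of $X^{\prime\prime}$. Your Mazur argument and your identification of $\sigma(X^{\prime\prime},X^{\prime\prime\prime})$ on $X$ with $\sigma(X,X^{\prime})$ spell out the step the paper compresses into the remark that $X\to X^{\prime\prime}$ is a Banach isomorphism onto its image.
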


\begin{proof}
Assume that a subset $A\subset E$ is topologically $b$-order bounded. Since $%
A$ is order bounded in $E^{\prime \prime }$ and $T^{\prime \prime
}:E^{\prime \prime }\rightarrow X^{\prime \prime }$ is order weakly compact,
then $T\left( A\right) =T^{\prime \prime }\left( A\right) $ is relatively
weakly compact in $X^{\prime \prime }.$ Since the injection $i:X\rightarrow
X^{\prime \prime }$ is a Banach isomorphism from $X$ onto $i\left( X\right)
, $ then $T\left( A\right) $ is relatively weakly compact in $X.$ This shows
that $T$ is gbwc as desired.
\end{proof}

\begin{corollary}
\label{cor T''T gbwc}Let $E$ and $X$ be respectively a locally convex-solid
Riesz space and a Banach space. Then, for each operator $T:E\rightarrow X,$
if $T^{\prime \prime }:E^{\prime \prime }\rightarrow X^{\prime \prime }$ is
gbwc then so is $T:E\rightarrow X$.
\end{corollary}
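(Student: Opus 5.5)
The plan is to reduce the corollary to the preceding proposition, which says that $T$ is gbwc as soon as $T^{\prime\prime}:E^{\prime\prime}\rightarrow X^{\prime\prime}$ is owc. So it suffices to show that if $T^{\prime\prime}$ is gbwc, then $T^{\prime\prime}$ is owc. Here $E^{\prime\prime}$ is regarded as a locally convex-solid Riesz space under its strong topology $\beta\left( E^{\prime\prime},E^{\prime}\right)$; this is the setting in which the gbwc hypothesis on $T^{\prime\prime}$ makes sense. Then $X^{\prime\prime}$ is the target Banach space.

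The key step is to observe that every order bounded subset of a locally convex-solid Riesz space $F$ is topologically $b$-order bounded. The canonical embedding $F\hookrightarrow F^{\prime\prime}$ is a lattice embedding, as recalled at the start of Section 2. Hence an order interval $\left[ -x,x\right]_{F}$ is mapped into the order interval $\left[ -x,x\right]_{F^{\prime\prime}}$, and so it is order bounded in $F^{\prime\prime}$. Consequently every gbwc operator from $F$ into a Banach space maps order intervals to relatively weakly compact sets, that is, it is owc. This is the implication gbwc $\Rightarrow$ owc already used in the paper, for instance in the parenthetical ``(hence owc)'' of the preceding proposition. We apply it with $F=E^{\prime\prime}$ and the operator $T^{\prime\prime}$.

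Combining the two steps, $T^{\prime\prime}$ gbwc implies $T^{\prime\prime}$ owc, and the preceding proposition then gives that $T$ is gbwc.

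The only point needing care is to make sure $E^{\prime\prime}$ with its strong topology really is a locally convex-solid Riesz space, so that the notion of gbwc applies to $T^{\prime\prime}$. This follows because $E^{\prime}$ is an ideal in $E^{\sim}$ carrying a locally convex-solid strong topology, and the strong dual of a locally convex-solid Riesz space is again locally convex-solid. Beyond that, the argument is immediate.
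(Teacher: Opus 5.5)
Your proposal is correct and matches the paper's reasoning: the paper gives no separate proof, and the corollary follows from the preceding proposition once one notes that gbwc implies owc. This holds because order intervals of $E^{\prime\prime}$ remain order bounded in its strong bidual under the canonical lattice embedding, which is exactly your argument.
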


The following example of Meyer-Nieberg shows in the Banach lattice case that
the second adjoint of a gbwc operator need not be gbwc. We present it in
details for sake of complete illustration.

\begin{example}[{\protect\cite[p 95]{MN}}]
Let $E=c_{0}(l_{1}^{n})$ be defined by 
\begin{equation*}
c_{0}(l_{1}^{n})=\left\{
x=(x_{n}):x_{n}=(x_{n}^{1},x_{n}^{2},...,x_{n}^{n})\in \ell _{1}^{n}\text{
and }(\left\Vert x_{n}\right\Vert _{1})_{n=1}^{\infty }\in c_{0}\right\}
\end{equation*}%
with the norm%
\begin{equation*}
\left\Vert x\right\Vert =\max \{\sum\limits_{i=1}^{n}\left\vert
x_{n}^{i}\right\vert :n\in \mathbb{N\}}.
\end{equation*}

Then,%
\begin{equation*}
E^{\prime }=l_{1}(l_{\infty }^{n}):=\left\{
x=(x_{n}):x_{n}=(x_{n}^{1},x_{n}^{2},...,x_{n}^{n})\in l_{\infty }^{n}\text{
and }(\left\Vert x_{n}\right\Vert _{\infty })_{n=1}^{\infty }\in
l_{1}\right\}
\end{equation*}
with the norm $\left\Vert x\right\Vert =\sum\limits_{n=1}^{\infty
}\left\Vert x_{n}\right\Vert _{\infty }.$ It follows that $E^{\prime }$ is a
KB space, but the third dual $E^{\prime \prime \prime }$ of $E$ is not a KB
space (hence does not have order continuous norm). Therefore the identity
operator $I$ on $E^{\prime }$ is b-weakly (order weakly) compact. But, the
second adjoint of $I$ is the identity operator on $E^{\prime \prime \prime }$
which is not b-weakly (order weakly) compact.
\end{example}

Note also that the bidual $E^{\prime \prime }$ of $E$ is%
\begin{equation*}
E^{\prime \prime }=l_{\infty }(l_{1}^{n}):=\left\{
x=(x_{n}):x_{n}=(x_{n}^{1},x_{n}^{2},...,x_{n}^{n})\in l_{1}^{n}\text{ and }%
(\left\Vert x_{n}\right\Vert _{1})_{n=1}^{\infty }\in l_{\infty }\right\}
\end{equation*}%
endowed with the norm%
\begin{equation*}
\left\Vert x\right\Vert =\sup_{n\in \mathbb{N}}\left\Vert x_{n}\right\Vert
_{1}.
\end{equation*}%
Hence, the norm of $E^{\prime \prime }$ is not order continuous.

The situation is different in case both the norms of a Banach lattice $E$
and its dual $E^{\prime }$ are order continuous, as shown in the following
proposition in which we present a sufficent condition on the domain space
for a gbwc operator to have a gbwc second adjoint.

\begin{proposition}
Let $\left( E,\tau \right) $ be a Dedekind-complete locally convex-solid
Riesz space such that $E^{\prime }$ is a Fr\'{e}chet lattice and both the
topologies $\tau $ and $\beta \left( E^{\prime },E\right) $ are Lebesgue.
Then, for a continuous operator $T$ from $E$ into a Banach space $X,$ the
following assertions are equivalent:
\end{proposition}

\begin{enumerate}
\item $T:E\rightarrow X$ is gbwc.

\item $T:E\rightarrow X$ is weakly compact.

\item $T^{\prime \prime }:E^{\prime \prime }\rightarrow X^{\prime \prime }$
is weakly compact.

\item $T^{\prime \prime }:E^{\prime \prime }\rightarrow X^{\prime \prime }$
is gbwc.
\end{enumerate}

\begin{proof}
$1\Rightarrow 2$ follows from Theorem \ref{th gbwc wc}.

$2\Rightarrow 3$ follows from \cite[Theorem 3.1]{Ruess}.

$3\Rightarrow 4$ is obvious.

$4\Rightarrow 1$ follows from Corollary \ref{cor T''T gbwc}.
\end{proof}

\section{Characterizations of gbwc operators}

\begin{theorem}
\label{th charac imp}Let $T:E\rightarrow X$ be a gbwc operator from a
locally convex-solid Riesz space into a Banach space. Then, $T(x_{n})$ is
norm convergent in $X$ for each increasing and topologically bounded
sequence $\left( x_{n}\right) \subset E^{+}$.

\begin{proof}
Let $\left( x_{n}\right) $ be increasing and topologically bounded in $E^{+}$%
. Then, there exists by \cite[Theorem 19.17]{LSRS} $x^{\prime \prime }\in
E_{+}^{\prime \prime }$ with $0\leq x_{n}\uparrow x^{\prime \prime }.$
Define the lattice norm $\left\Vert x\right\Vert _{x^{\prime \prime }}=\inf
\left\{ \lambda >0:\left\vert x\right\vert \leq \lambda x^{\prime \prime
}\right\} $ on $E_{x^{\prime \prime }}=E\cap I_{x^{\prime \prime }},$ where $%
I_{x^{\prime \prime }}$ is the principal ideal generated by $x^{\prime
\prime }$ in $E^{\prime \prime }.$ Let $T_{x^{\prime \prime }}$ be the
restriction of $T$ to $E_{x^{\prime \prime }}.$ Then, $T_{x^{\prime \prime
}}:E_{x^{\prime \prime }}\rightarrow X$ is a weakly compact operator. By 
\cite[Theorem 3.1]{Ruess},\ $T_{x^{\prime \prime }}^{\prime \prime
}:E_{x^{\prime \prime }}^{\prime \prime }\rightarrow X^{\prime \prime }$ is
likewise weakly compact, hence, $b$-weakly compact. Since $\left(
x_{n}\right) $ is increasing and topologically bounded in $E_{x^{\prime
\prime }}^{+},$ $(T_{x^{\prime \prime }}\left( x_{n}\right) )$ is convergent
in $X^{\prime \prime }.$ Since, $X$ is norm closed in $X^{\prime \prime },$
then $(T_{x^{\prime \prime }}\left( x_{n}\right) )$ is convergent in $X.$
\end{proof}
\end{theorem}

The following main result characterizes a continuous gbwc operator on a
normed Riesz space $E$ in term of $b$-weak compactness of its continuous
extension to the norm completion $\widehat{E}$ of $E$.

\begin{theorem}
\label{th charac normed}Let $T:E\rightarrow X$ be a continuous operator from
a normed Riesz space into a Banach space. Then, the following assertions
hold true:

\begin{enumerate}
\item $T$ is gbwc.

\item $T(x_{n})$ is norm convergent in $X$ for each increasing and norm
bounded sequence $\left( x_{n}\right) \subset E^{+}$.

\item The continuous extension $\widehat{T}:\widehat{E}\rightarrow X$ is $b$%
-weakly compact.
\end{enumerate}
\end{theorem}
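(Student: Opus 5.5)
We read the statement as asserting that assertions 1--3 are equivalent, and we prove the cycle $1\Rightarrow 2\Rightarrow 3\Rightarrow 1$.

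The implication $1\Rightarrow 2$ is a special case of Theorem \ref{th charac imp}, since in a normed Riesz space topologically bounded means norm bounded.

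For $2\Rightarrow 3$ we use the known sequential characterization of $b$-weakly compact operators on Banach lattices (Alpay--Altin--Tonyali): an operator $S:F\rightarrow X$ on a Banach lattice is $b$-weakly compact iff $(Sy_{n})$ is norm convergent for every increasing norm bounded sequence $(y_{n})\subset F^{+}$. So fix an increasing norm bounded sequence $(y_{n})\subset \widehat{E}^{+}$ with $\Vert y_{n}\Vert \leq M$. The main obstacle is to approximate it by an increasing norm bounded sequence of $E^{+}$.
\begin{enumerate}
\item Since $E$ is a dense Riesz subspace of $\widehat{E}$ and the lattice operations are continuous, $E^{+}$ is dense in $\widehat{E}^{+}$. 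Hence we may choose $z_{n}\in E^{+}$ with $\Vert z_{n}-y_{n}\Vert <2^{-n}$.
\item Set $x_{n}=z_{1}\vee \cdots \vee z_{n}\in E^{+}$. This sequence is increasing.
\item Using $y_{k}\leq y_{n}$ for $k\leq n$, we get $0\leq x_{n}-x_{n}\wedge y_{n}\leq \sum_{k\leq n}(z_{k}-y_{k})^{+}$, whose norm is at most $\sum_{k}2^{-k}\leq 1$.
\item Because $x_{n}\wedge y_{n}\leq y_{n}$ and the norm is a lattice norm, $\Vert x_{n}\Vert \leq M+1$, so $(x_{n})$ is norm bounded.
\end{enumerate}
By 2, $(Tx_{n})$ converges in $X$. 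It remains to compare $\widehat{T}y_{n}$ with $Tx_{n}$.

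Write $x_{n}-y_{n}=(x_{n}-x_{n}\wedge y_{n})-(y_{n}-x_{n}\wedge y_{n})$. The first term is controlled by the tail estimate from step 3, but the second term need not be small, so this decomposition does not finish the argument directly. Instead we build the sequences so that the approximation error tends to zero.
\begin{enumerate}
\item Choose $z_{n}\in E^{+}$ with $z_{n}\leq$ (approximately) $y_{n}$, for instance replacing $z_{n}$ by a suitable element close to $y_{n}$.
\item Set $x_{n}=\bigvee_{k\leq n}z_{k}$, so that $x_{n}\leq y_{n}+\sum_{k\leq n}(z_{k}-y_{k})^{+}$ and also $x_{n}\geq z_{n}$.
\item Then $|x_{n}-y_{n}|\leq |z_{n}-y_{n}|+\sum_{k}(z_{k}-y_{k})^{+}$.
\end{enumerate}
To make the right-hand side tend to zero, we choose the approximation errors with tails summing to zero. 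Concretely, we pick a fresh approximating sequence with errors $\varepsilon 2^{-k}$ for each $\varepsilon$, and conclude that $(\widehat{T}y_{n})$ is Cauchy up to $2\Vert T\Vert \varepsilon$ for every $\varepsilon$. Hence $(\widehat{T}y_{n})$ is Cauchy and therefore converges, so $\widehat{T}$ is $b$-weakly compact.

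For $3\Rightarrow 1$, let $A\subset E$ be topologically $b$-order bounded, so $A\subset [-x^{\prime\prime},x^{\prime\prime}]$ in $E^{\prime\prime}=\widehat{E}^{\prime\prime}$. Viewing $A$ inside $\widehat{E}$, it is $b$-order bounded there. Since $\widehat{T}$ is $b$-weakly compact, $T(A)=\widehat{T}(A)$ is relatively weakly compact. Hence $T$ is gbwc.

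The delicate point is step $2\Rightarrow 3$: we must approximate an increasing bounded sequence in $\widehat{E}^{+}$ by an increasing bounded sequence in $E^{+}$ with summable errors, while keeping control of $\widehat{T}(x_{n}-y_{n})\rightarrow 0$.
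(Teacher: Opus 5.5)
Your proposal is correct and follows the paper's route. The implications $1\Rightarrow 2$ and $3\Rightarrow 1$ are handled as in the paper. Your final version of $2\Rightarrow 3$ is also the paper's argument: approximants $z_k\in E^{+}$ with errors $\varepsilon 2^{-k}$, increasing suprema $x_n=\bigvee_{k\le n}z_k$, a uniform $O(\varepsilon)$ bound on $\Vert x_n-y_n\Vert$, hypothesis 2, and then the sequential characterization of $b$-weakly compact operators. The paper gets the bound from $|\bigvee_{i\le n}x_i-\bigvee_{i\le n}y_i|\le\sum_{i\le n}|x_i-y_i|$.

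In your discarded first attempt, the term $y_n-x_n\wedge y_n=(y_n-x_n)^{+}\le(y_n-z_n)^{+}$ was in fact already small, since $x_n\ge z_n$. The only real defect there was the unscaled error $\sum_k 2^{-k}$. Your Cauchy formulation is slightly cleaner than the paper's, whose limit $y_0$ depends on $\varepsilon$.
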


\begin{proof}
$1\Rightarrow 2$ is Theorem \ref{th charac imp}.

$2\Rightarrow 3:$ Let $\varepsilon >0$ and $\left( x_{n}\right) $ be norm
bounded with $0\leq x_{n}\uparrow $ in $\widehat{E}.$ Here, we proceed by
the same ideas as in the proof of \cite[Theorem 4.11 (Luxemburg)]{POS.OPR}
or again the proof of \cite[Lemma 3.3]{HajjiMah}. There exists a sequence $%
(y_{n})\subset E^{+}$ with $\left\Vert x_{n}-y_{n}\right\Vert <\frac{%
\varepsilon }{\left\Vert T\right\Vert }2^{-n}.$ For every $n,$ put $%
z_{n}=\vee _{i=1}^{n}y_{i}$ and note that $0\leq z_{n}\uparrow $. By \cite[%
Lemma 3.2]{HajjiMah}, one has 
\begin{equation*}
\left\vert x_{n}-z_{n}\right\vert =\left\vert \vee _{i=1}^{n}x_{i}-\vee
_{i=1}^{n}y_{i}\right\vert \leq \sum\limits_{i=1}^{n}\left\vert
x_{i}-y_{i}\right\vert
\end{equation*}%
for every $n$, hence$,$ $\left\Vert x_{n}-z_{n}\right\Vert \leq \frac{%
\varepsilon }{\left\Vert T\right\Vert }$ holds for each $n.$ Therefore, 
\begin{equation*}
\left\Vert z_{n}\right\Vert \leq \left\Vert z_{n}-x_{n}\right\Vert
+\left\Vert x_{n}\right\Vert \leq \frac{\varepsilon }{\left\Vert
T\right\Vert }+\left\Vert x_{n}\right\Vert
\end{equation*}%
for every $n,$ hence, $(z_{n})$ is norm bounded in $E$. It follows from the
hypothesis that $T(z_{n})\rightarrow y_{0}$ in norm for some $y_{0}\in X$.
Therefore, there exists a naturel number $n_{0}$ such that $\left\Vert
T(z_{n})-y_{0}\right\Vert \leq \varepsilon $ for each $n\geq n_{0}.$ Then,

\begin{eqnarray*}
\left\Vert \widehat{T}(x_{n})-y_{0}\right\Vert &=&\left\Vert \widehat{T}%
(x_{n})-\widehat{T}(z_{n})+\widehat{T}(z_{n})-y_{0}\right\Vert \\
&\leq &\left\Vert \widehat{T}(x_{n}-z_{n})\right\Vert +\left\Vert
T(z_{n})-y_{0}\right\Vert \\
&\leq &\left\Vert T\right\Vert \left\Vert x_{n}-z_{n}\right\Vert +\left\Vert
T(z_{n})-y_{0}\right\Vert \leq 2\varepsilon
\end{eqnarray*}%
for every $n\geq n_{0}.$ That is, $\widehat{T}(x_{n})\rightarrow y_{0}$ in
norm. By \cite[Proposition 1]{Altin}, $\widehat{T}:\widehat{E}\rightarrow X$
is $b$-weakly compact as desired.

$3\Rightarrow 1:$ If $A$ is topologically $b$-order bounded in $E,$ then $A$
is $b$-order bounded in $\widehat{E}.$ From the hypothesis $T(A)=\widehat{T}%
\left( A\right) $ is relatively weakly compact in X. Therefore, $T$ is gbwc.
\end{proof}

Next, the following lemma will be useful in what follows.

\begin{lemma}
\label{lem continuity}Let $T:E\rightarrow X$ be an operator from a normed
Riesz space into a Banach space. If $\left( Tx_{n}\right) $ is convergent
for each increasing and norm bounded sequence $\left( x_{n}\right) \subset
E^{+}$, then $T$ is a continuous operator.
\end{lemma}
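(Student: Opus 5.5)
We argue by contradiction. Suppose $T$ is not continuous. Then $T$ is unbounded on the closed unit ball of $E$, so there is a sequence $(y_{n})\subset E$ with $\left\Vert y_{n}\right\Vert \leq 1$ and $\left\Vert Ty_{n}\right\Vert \rightarrow \infty$. Writing $y_{n}=y_{n}^{+}-y_{n}^{-}$, with $\left\Vert y_{n}^{\pm }\right\Vert \leq \left\Vert y_{n}\right\Vert \leq 1$ because the norm is a lattice norm, at least one of $\left\Vert Ty_{n}^{+}\right\Vert$ or $\left\Vert Ty_{n}^{-}\right\Vert$ is unbounded. Hence $T$ is unbounded on $B_{E}\cap E^{+}$. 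After passing to a subsequence we may therefore fix $u_{n}\in E^{+}$ with $\left\Vert u_{n}\right\Vert \leq 1$ and $\left\Vert Tu_{n}\right\Vert \geq 4^{n}$.

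The plan is to turn these into an increasing norm bounded sequence whose image diverges. Set $v_{n}=2^{-n}u_{n}$, so that $\left\Vert v_{n}\right\Vert \leq 2^{-n}$ and $\left\Vert Tv_{n}\right\Vert \geq 2^{n}$. Then define the partial sums $x_{n}=\sum_{i=1}^{n}v_{i}\in E^{+}$. The sequence $(x_{n})$ is increasing and satisfies $\left\Vert x_{n}\right\Vert \leq \sum 2^{-i}\leq 1$, so it is norm bounded. Completeness of $E$ is never needed, because every $x_{n}$ lies in $E$ itself. By hypothesis $(Tx_{n})$ converges in $X$, and in particular it is Cauchy. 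But
\begin{equation*}
Tx_{n}-Tx_{n-1}=Tv_{n}, \qquad \left\Vert Tv_{n}\right\Vert \geq 2^{n}\rightarrow \infty ,
\end{equation*}
so consecutive differences do not tend to $0$. This contradicts the Cauchy property, and therefore $T$ is continuous.

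The only step that needs care is the reduction to positive vectors, that is, making sure that unboundedness passes to $E^{+}$ with norm control. This follows from the lattice norm property $\left\Vert y^{\pm }\right\Vert \leq \left\Vert y\right\Vert$ together with linearity, and needs no extra structure. The remaining steps are routine.
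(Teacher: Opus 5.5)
Your proof is correct and is essentially the paper's argument. Both build increasing, norm-bounded sequences of weighted partial sums of positive vectors, then use the convergence (hence Cauchy property) of their images under $T$ to contradict the unbounded growth of $\Vert Tx\Vert$. The only cosmetic difference is that you first reduce unboundedness to $E^{+}$ and work with a single sequence, whereas the paper runs two sequences simultaneously, built from $x_i^{+}/i^{2}$ and $x_i^{-}/i^{2}$.
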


\begin{proof}
Assume that $T$ is unbounded. Then, there exists $\left( x_{n}\right)
\subset E$ with $\left\Vert x_{n}\right\Vert =1$ and $\left\Vert
T(x_{n})\right\Vert \geq n^{3}$ for each $n$. For every $n,$ let $%
u_{n}=\sum\limits_{i=1}^{n}\frac{x_{i}^{+}}{i^{2}}$ and $v_{n}=\sum%
\limits_{i=1}^{n}\frac{x_{i}^{-}}{i^{2}}$. Clearly, ($u_{n})$ and ($v_{n})$
are increasing and norm bounded in $E^{+}.$ From the hypothesis, the
sequences $T(u_{n})=\sum\limits_{i=1}^{n}T(\frac{x_{i}^{+}}{i^{2}})$ and $%
T(v_{n})=\sum\limits_{i=1}^{n}T(\frac{x_{i}^{-}}{i^{2}}$ ) are convergent.
Since $X$ is a Banach space, $T(\frac{x_{n}^{+}}{n^{2}})\rightarrow 0$ and $%
T(\frac{x_{n}^{-}}{n^{2}})\rightarrow 0$ in norm. It follows that $T(\frac{%
x_{n}}{n^{2}})\rightarrow 0$ in norm, which is impossible. Hence, $T$ is
continuous as desired.
\end{proof}

Now, we get the following sequential characterization of gbwc operators on
locally convex-solid Riesz spaces.

\begin{theorem}
\label{Th charc no conditions}An operator $T:E\rightarrow X$ from a locally
convex-solid Riesz space into a Banach space is gbwc iff $\left(
Tx_{n}\right) $ is convergent for each increasing and topologically bounded
sequence $\left( x_{n}\right) \subset E^{+}$.
\end{theorem}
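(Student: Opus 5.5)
The forward implication is exactly Theorem \ref{th charac imp}, so only the converse needs work. Assume that $(Tx_{n})$ converges for every increasing, topologically bounded sequence $(x_{n})\subset E^{+}$, and let $A\subset E$ be topologically $b$-order bounded. The plan is to localize, as in the proof of Theorem \ref{th charac imp}, and then apply the normed-space results Lemma \ref{lem continuity} and Theorem \ref{th charac normed}. Concretely, choose $x^{\prime \prime }\in E_{+}^{\prime \prime }$ with $A\subset \lbrack -x^{\prime \prime },x^{\prime \prime }]$. Equip $E_{x^{\prime \prime }}=E\cap I_{x^{\prime \prime }}$ with the lattice norm $\left\Vert \cdot \right\Vert _{x^{\prime \prime }}$, and let $T_{x^{\prime \prime }}$ denote the restriction of $T$ to $E_{x^{\prime \prime }}$. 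Then $A$ lies in the closed unit ball of $(E_{x^{\prime \prime }},\left\Vert \cdot \right\Vert _{x^{\prime \prime }})$.

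\emph{Step 1: norm bounded sets of $E_{x^{\prime \prime }}$ are topologically bounded in $E$.} A $\left\Vert \cdot \right\Vert _{x^{\prime \prime }}$-bounded set is contained in some $[-\lambda x^{\prime \prime },\lambda x^{\prime \prime }]\subset E^{\prime \prime }$. Since $\beta (E^{\prime },E)$ is locally solid, $E^{\prime }$ consists of order bounded functionals on $E^{\prime \prime }$. Hence order intervals of $E^{\prime \prime }$ are $\sigma (E^{\prime \prime },E^{\prime })$-bounded, so the set is $\sigma (E,E^{\prime })$-bounded. By Mackey's theorem it is then $\tau $-bounded.

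\emph{Step 2: $T_{x^{\prime \prime }}$ is continuous and gbwc.} By Step 1, every increasing $\left\Vert \cdot \right\Vert _{x^{\prime \prime }}$-bounded sequence in $E_{x^{\prime \prime }}^{+}$ is an increasing topologically bounded sequence in $E^{+}$. The hypothesis therefore says $(T_{x^{\prime \prime }}x_{n})$ converges for all such sequences. Lemma \ref{lem continuity} then gives continuity of $T_{x^{\prime \prime }}$, and the implication $2\Rightarrow 1$ of Theorem \ref{th charac normed} shows that $T_{x^{\prime \prime }}$ is gbwc. Equivalently, by $2\Rightarrow 3$, its extension $\widehat{T_{x^{\prime \prime }}}$ to the completion is $b$-weakly compact.

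\emph{Step 3: $A$ is topologically $b$-order bounded in $E_{x^{\prime \prime }}$.} This is where I expect the main obstacle, since norm bounded sets are usually not $b$-order bounded. The special structure of $\left\Vert \cdot \right\Vert _{x^{\prime \prime }}$ is what saves us. This norm is an M-norm, since $\left\vert x\right\vert ,\left\vert y\right\vert \leq \lambda x^{\prime \prime }$ gives $\left\vert x\right\vert \vee \left\vert y\right\vert \leq \lambda x^{\prime \prime }$. So the completion $\widehat{E_{x^{\prime \prime }}}$ is an AM-space. The bidual of an AM-space is an AM-space with unit, and its closed unit ball is the order interval generated by that unit. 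Consequently the unit ball of $E_{x^{\prime \prime }}$, and hence $A$, is order bounded in $E_{x^{\prime \prime }}^{\prime \prime }=\widehat{E_{x^{\prime \prime }}}^{\prime \prime }$. Since $T_{x^{\prime \prime }}$ is gbwc, it follows that $T(A)=T_{x^{\prime \prime }}(A)$ is relatively weakly compact in $X$, so $T$ is gbwc.

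If the AM-space argument in Step 3 causes trouble, I would fall back on a sequential reformulation instead. I would use the characterization of $b$-weak compactness of $\widehat{T_{x^{\prime \prime }}}$ via disjoint increasing sequences from \cite{Altin}, applied directly to the set $A$.
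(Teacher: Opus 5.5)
Your proposal is correct and follows the paper's proof. Both localize to $E_{x^{\prime\prime}}$ with the norm $\Vert\cdot\Vert_{x^{\prime\prime}}$, obtain continuity of $T_{x^{\prime\prime}}$ from Lemma \ref{lem continuity}, and then apply Theorem \ref{th charac normed}; your Mackey argument in Step 1 just re-proves the cited fact that topologically $b$-order bounded sets are topologically bounded. The only real variation is Step 3: the paper reduces to $A\subset E^{+}$ and uses that the increasing, norm bounded set $\mathcal{A}$ of finite suprema of $A$ is order bounded in the bidual, whereas you note that $\Vert\cdot\Vert_{x^{\prime\prime}}$ is an M-norm, so the whole unit ball of $E_{x^{\prime\prime}}$ lies in the order interval $[-e,e]$ of the AM-space with unit $E_{x^{\prime\prime}}^{\prime\prime}$. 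That argument is valid and handles arbitrary $A$ directly.
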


\begin{proof}
The "only if" part is Teorem \ref{th charac imp}. For the "if" part, it
suffices to show that $T\left( A\right) $ is relatively weakly compact in $X$
for every topologically $b$-order bounded set $A\subset E^{+}.$ For such a
set, there exists $x^{\prime \prime }\in E_{+}^{\prime \prime }$ such that $%
0\leq a\leq x^{\prime \prime }$ for each $a\in A.$ Since a topologically $b$%
-order bounded set in $E$ is topologically bounded (see \cite[Proposition 3.8%
]{Hac}), then by the hypothesis, $\left( T_{x^{\prime \prime }}x_{n}\right) $
is convergent for each increasing and norm bounded sequence $\left(
x_{n}\right) \subset E_{x^{\prime \prime }}^{+}$, where $T_{x^{\prime \prime
}}:E_{x^{\prime \prime }}\rightarrow X$ is the restriction of $T$ to $%
E_{x^{\prime \prime }}$. Note that $T_{x^{\prime \prime }}$ is continuous by
the above Lemma \ref{lem continuity}. It follows now from Theorem \ref{th
charac normed} that $\widehat{T_{x^{\prime \prime }}}:\widehat{E_{x^{\prime
\prime }}}\rightarrow X$ is $b$-weakly compact. Let $\mathcal{A}$ be the set
of suprema of finitely many elements of $A.$ Then, $0\leq \mathcal{A\uparrow 
}\leq x^{\prime \prime }.$ That is, $\mathcal{A}$ is increasing and norm
bounded in $E_{x^{\prime \prime }}^{+}.$ It follows that $\mathcal{A}$ is
topologically $b$-order bounded in $E_{x^{\prime \prime }},$ or $\mathcal{A}$
is $b$-order bounded in $\widehat{E_{x^{\prime \prime }}}.$ Therefore, $%
\widehat{T_{x^{\prime \prime }}}\left( \mathcal{A}\right) $ is relatively
weakly compact in $X.$ Hence, $T\left( A\right) =\widehat{T_{x^{\prime
\prime }}}\left( A\right) \subset \widehat{T_{x^{\prime \prime }}}\left( 
\mathcal{A}\right) $ is relatively weakly compact in $X.$ This shows that $T$
is a gbwc, as desired.
\end{proof}

\begin{remark}
The above theorem extends the results of literature on the subject where
completeness type conditions are usually assumed for the domain space; see 
\cite[Proposition 1]{Altin} and \cite[Theorem 3.11(3)]{Hac}.
\end{remark}

\begin{corollary}
Let $S,T:E\rightarrow X$ be two operators from a locally convex-solid Riesz
space into a Banach lattice $X$ with $0\leq S\leq T$ . Then, $S$ is gbwc
whenever $T$ is one.
\end{corollary}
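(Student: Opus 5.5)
We will rely on the sequential characterization in Theorem \ref{Th charc no conditions}. By that theorem, it suffices to show the following: if $(x_n)\subset E^{+}$ is increasing and topologically bounded, then $(Sx_n)$ is norm convergent in $X$. Since $T$ is gbwc, the same theorem gives that $(Tx_n)$ is norm convergent, in particular norm Cauchy. The whole argument therefore reduces to transferring the Cauchy property from $(Tx_n)$ to $(Sx_n)$, using the order domination $0\le S\le T$ and the lattice norm of $X$.

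\textbf{The Cauchy estimate.} Fix $m\ge n$. Then $x_m-x_n\ge 0$, so positivity of $S$ and $T-S$ gives
\begin{equation*}
0\le S(x_m-x_n)\le T(x_m-x_n)
\end{equation*}
in $X$. Since $X$ is a Banach lattice, its norm is a lattice norm, and hence
\begin{equation*}
\left\Vert Sx_m-Sx_n\right\Vert \le \left\Vert Tx_m-Tx_n\right\Vert .
\end{equation*}
The right-hand side tends to $0$ as $m,n\to\infty$, so $(Sx_n)$ is norm Cauchy. By completeness of $X$ it converges, and Theorem \ref{Th charc no conditions} then yields that $S$ is gbwc.

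\textbf{Points to check.} The operators here are only assumed linear, not continuous, and no continuity is needed: the inequality $0\le S\le T$ is understood in the ordering of operators into the Riesz space $X$, that is, $T-S$ and $S$ map $E^{+}$ into $X^{+}$, and this is all that is used. The one point needing care is that the characterization in Theorem \ref{Th charc no conditions} requires no extra hypotheses on $E$, which is exactly why it is the right tool here. I expect no genuine obstacle; the only subtle step is remembering to exploit monotonicity through differences $x_m-x_n\ge 0$, rather than comparing $Sx_n$ and $Tx_n$ directly.
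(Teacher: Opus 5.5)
Your proposal is correct and is essentially the paper's proof: for an increasing topologically bounded $(x_n)\subset E^{+}$ and $n\ge m$, the paper uses $0\le S(x_n-x_m)\le T(x_n-x_m)$ and the lattice norm to get $\Vert Sx_n-Sx_m\Vert\le\Vert Tx_n-Tx_m\Vert$. It then transfers the Cauchy property and applies Theorem \ref{Th charc no conditions} in both directions, exactly as you do (your remark that no continuity of $S$ or $T$ is needed is consistent with that theorem's hypotheses).
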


\begin{proof}
If $\left( x_{n}\right) $ is increasing and topologically bounded in $E_{+}$%
\textbf{\ }and\textbf{\ }$n\geq m$ then, 
\begin{equation*}
\left\vert S(x_{n})-S(x_{m})\right\vert =S(x_{n}-x_{m})\leq
T(x_{n}-x_{m})=T(x_{n})-T(x_{m}).
\end{equation*}%
It follows that $\left\Vert S(x_{n})-S(x_{m})\right\Vert \leq \left\Vert
T(x_{n})-T(x_{m})\right\Vert .$ By Theorem \ref{Th charc no conditions}, $%
\left( Tx_{n}\right) $ is a Cauchy sequence in $X$ and the latter inequality
shows that $\left( Sx_{n}\right) $ is also Cauchy in $X.$ This shows again
by Theorem \ref{Th charc no conditions} that $S$ is a gbwc operator.
\end{proof}

The following corollary is also obtained immediately from Theorem \ref{Th
charc no conditions}. The space of all gbwc operators and continuous
operators from a locally convex-solid Riesz space $E$ into a Banach space $X$
will be denoted by $\mathcal{W}_{gbc}(E,X)$ and $\mathcal{L}(E,X),$
respectively.

\begin{corollary}
If $E$ is a normed Riesz space and $X$ is a Banach space, then $\mathcal{W}%
_{gbc}(E,X)$ is a norm closed vector subspace of $\mathcal{L}(E,X).$
\end{corollary}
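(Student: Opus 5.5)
Since $E$ is normed, Theorem \ref{Th charc no conditions} reduces everything to sequences: an operator $T:E\rightarrow X$ is gbwc iff $(Tx_{n})$ converges in $X$ for every increasing, norm bounded sequence $(x_{n})\subset E^{+}$. Moreover, Lemma \ref{lem continuity} shows that every such operator is automatically continuous. Hence $\mathcal{W}_{gbc}(E,X)\subset \mathcal{L}(E,X)$. The plan is to check the vector subspace property and the norm closedness separately, both through this sequential criterion.

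For the subspace property, take $S,T\in \mathcal{W}_{gbc}(E,X)$, scalars $\alpha ,\beta $, and an increasing norm bounded sequence $(x_{n})\subset E^{+}$. Then $(Sx_{n})$ and $(Tx_{n})$ both converge, so $((\alpha S+\beta T)x_{n})=(\alpha Sx_{n}+\beta Tx_{n})$ converges as well. By Theorem \ref{Th charc no conditions}, $\alpha S+\beta T$ is gbwc.

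For closedness, let $T_{k}\in \mathcal{W}_{gbc}(E,X)$ with $\left\Vert T_{k}-T\right\Vert \rightarrow 0$ for some $T\in \mathcal{L}(E,X)$. Fix an increasing sequence $(x_{n})\subset E^{+}$ with $\left\Vert x_{n}\right\Vert \leq M$ for all $n$. We show $(Tx_{n})$ is Cauchy with a standard $\varepsilon /3$ argument:
\begin{equation*}
\left\Vert Tx_{n}-Tx_{m}\right\Vert \leq \left\Vert (T-T_{k})x_{n}\right\Vert +\left\Vert T_{k}x_{n}-T_{k}x_{m}\right\Vert +\left\Vert (T_{k}-T)x_{m}\right\Vert \leq 2M\left\Vert T-T_{k}\right\Vert +\left\Vert T_{k}x_{n}-T_{k}x_{m}\right\Vert .
\end{equation*}
Given $\varepsilon >0$, first choose $k$ so that $2M\left\Vert T-T_{k}\right\Vert <\varepsilon /2$. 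Since $T_{k}$ is gbwc, $(T_{k}x_{n})$ converges by Theorem \ref{Th charc no conditions} and is therefore Cauchy. So there is $n_{0}$ with $\left\Vert T_{k}x_{n}-T_{k}x_{m}\right\Vert <\varepsilon /2$ for all $n,m\geq n_{0}$, which gives $\left\Vert Tx_{n}-Tx_{m}\right\Vert <\varepsilon $ for $n,m\geq n_{0}$.

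Since $X$ is a Banach space, $(Tx_{n})$ converges, and Theorem \ref{Th charc no conditions} then shows $T$ is gbwc. There is no real obstacle here. The only point needing care is that the bound $M$ must be uniform in $n$, which is exactly the norm boundedness assumed on $(x_{n})$.
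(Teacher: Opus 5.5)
Your proof is correct and takes the paper's route: the paper only says the corollary is immediate from Theorem \ref{Th charc no conditions}, and your linearity check together with the Cauchy estimate using the uniform bound $M$ is exactly the routine verification behind that claim. Your remark that Lemma \ref{lem continuity} forces every gbwc operator on a normed Riesz space to be continuous is a useful addition, since it gives $\mathcal{W}_{gbc}(E,X)\subset\mathcal{L}(E,X)$ whichever way the paper's notation is read.
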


To state our following main theorem, we need to note by \cite[Proposition 11]%
{EMA} that every Fr\'{e}chet lattice $E$ is uniformly closed in its strong
bidual $E^{\prime \prime }.$ On the other hand, we know that bands of a
Riesz space are also uniformly closed. The following theorem provides us
operator (as well as sequence) characterizations (for other
characterizations, see \cite[Theorem 22.2]{LSRS}) of Fr\'{e}chet lattices
that are bands in their strong biduals in term of their gbwc operators.

\begin{theorem}
\label{th charac KF}For a Fr\'{e}chet lattice $\left( E,\tau \right) $, the
following statements are equivalent:

\begin{enumerate}
\item $E$ is a band in $E^{\prime \prime }.$

\item Each continuous operator $T:E\rightarrow X$ into an arbitrary Banach
space $X$ is gbwc.

\item Each positive operator $T:E\rightarrow F$ into an arbitrary Banach
lattice $F$ is gbwc.

\item Each increasing and topologically bounded sequence $\left(
x_{n}\right) \subset E^{+}$ is convergent.

\item Each increasing and topologically bounded net $\left( x_{\alpha
}\right) \subset E^{+}$ is convergent.
\end{enumerate}
\end{theorem}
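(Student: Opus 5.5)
I will run the cycle $1\Rightarrow 5\Rightarrow 4\Rightarrow 2\Rightarrow 3\Rightarrow 4\Rightarrow 1$, using Theorem \ref{Th charc no conditions} as the bridge between operators and sequences.

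For $1\Rightarrow 5$: let $0\le x_\alpha\uparrow$ be topologically bounded in $E$. By \cite[Theorem 19.17]{LSRS} there is $x''\in E''_+$ with $x_\alpha\uparrow x''$ in $E''$. Since $E$ is a band in $E''$ (in particular an ideal, hence an order-dense-free but solid subspace) and $x''$ is the supremum of elements of $E$, band closure gives $x''\in E$. So $x_\alpha\uparrow x''$ in $E$. To obtain $\tau$-convergence I would use that $\beta(E'',E')$ restricted to $E$ is $\tau$ (valid for Fr\'echet, hence barrelled/metrizable, spaces) and that the strong bidual topology is Lebesgue on the band $B(E)$: by \cite[Theorem 22.2/22.3]{LSRS}, $E$ being a band means precisely that order convergence of such nets in $E$ is topological. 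If that citation is not directly usable, I would argue via functionals: $x''-x_\alpha\downarrow 0$ in $E$, and each $f\in E'_+$ is order continuous on $E$ when $E$ is a band in $E''$ (since $f$ acts on $E''$ as a $\sigma(E'',E')$-continuous, hence order continuous, functional restricted to an ideal). Then Dini-type arguments on equicontinuous solid sets give uniform convergence, i.e. $\tau$-convergence.

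$5\Rightarrow 4$ is trivial. For $4\Rightarrow 2$: take a continuous $T:E\to X$ and an increasing topologically bounded $(x_n)\subset E^+$; by (4) it converges in $E$, so $(Tx_n)$ converges in $X$ by continuity, and Theorem \ref{Th charc no conditions} gives that $T$ is gbwc. For $2\Rightarrow 3$: positive operators from a Fr\'echet lattice into a Banach lattice are continuous (standard automatic continuity for metrizable complete locally solid spaces), so (2) applies.

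For $3\Rightarrow 4$, which I expect to be the main obstacle: given an increasing topologically bounded $(x_n)\subset E^+$, I must produce a convergent limit in $E$ using only positive operators into Banach lattices. The plan is to take a lattice seminorm $p$ from a countable defining family and form the Banach lattice $F_p$, the completion of $E/p^{-1}(0)$, with the positive quotient map $Q_p$. By (3) and Theorem \ref{Th charc no conditions}, $(Q_px_n)$ is Cauchy in $F_p$, i.e. $(x_n)$ is $p$-Cauchy. Since this holds for every $p$, $(x_n)$ is $\tau$-Cauchy, hence convergent by completeness. Finally, for $4\Rightarrow 1$: by \cite[Proposition 11]{EMA}, $E$ is uniformly closed in $E''$, and (4) yields that $E$ is an ideal in $E''$ with the $\sigma$-Lebesgue and Levi-type properties of \cite[Theorem 22.2]{LSRS}. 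I would invoke that characterization, in the form "$E$ is a band in $E''$ iff topologically bounded increasing sequences in $E^+$ converge", to conclude. If a direct argument is needed, I would take $0\le x''\le$ an element of $E$ (for the ideal property) or a supremum of elements of $E$ (for band closure), approximate it by increasing sequences from $E$ using metrizability, and conclude via (4) and uniform closedness.
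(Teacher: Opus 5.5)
Your implications $1\Rightarrow 5\Rightarrow 4\Rightarrow 2\Rightarrow 3\Rightarrow 4$ are fine. Getting $1\Rightarrow 5$ from the Lebesgue property of $\tau$, and $4\Rightarrow 2$ from continuity plus Theorem \ref{Th charc no conditions}, is a clean substitute for the paper's appeal to \cite[Theorem 3.18]{Hac}. Your $3\Rightarrow 4$ is the paper's quotient-lattice argument.

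The gap is the closing step $4\Rightarrow 1$. \cite[Theorem 22.2]{LSRS} characterizes bands in $E''$ through \emph{nets}: band in $E''$ $\Leftrightarrow$ Levi and Lebesgue $\Leftrightarrow$ every topologically bounded increasing net in $E^{+}$ converges. The sequential form you quote is not part of it. For Fr\'{e}chet lattices, that sequential form is exactly what the present theorem adds, so citing it is circular. Your assertion that (4) ``yields that $E$ is an ideal in $E''$'' is likewise unsupported. What (4) gives immediately is the $\sigma$-Lebesgue and $\sigma$-Levi properties, whereas the ideal property comes from the full (net) Lebesgue property, cf.\ \cite[Theorem 22.1]{LSRS}. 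Your fallback does not close this either. Nothing guarantees that an $x''\in E''$ with $0\le x''\le x\in E$ is the supremum of an increasing sequence from $E$; that is essentially the statement that $E$ is an ideal.

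The missing idea is the sequence-to-net upgrade, which is where metrizability and completeness of $E$ enter. This is the paper's $4\Rightarrow 5$, after which the net version of \cite[Theorem 22.2]{LSRS} gives (1). Let $0\le x_\alpha\uparrow$ be topologically bounded, and let $(q_k)$ be a defining sequence of lattice seminorms. For $\delta\ge\beta,\gamma\ge\alpha$ you have $|x_\beta-x_\gamma|\le (x_\delta-x_\beta)+(x_\delta-x_\gamma)\le 2(x_\delta-x_\alpha)$. Hence, if $(x_\alpha)$ were not $\tau$-Cauchy, there would be $k$ and $\varepsilon>0$ such that every $\alpha$ has some $\beta\ge\alpha$ with $q_k(x_\beta-x_\alpha)\ge\varepsilon$. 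Choose inductively $\alpha_1\le\alpha_2\le\cdots$ with $q_k(x_{\alpha_{n+1}}-x_{\alpha_n})\ge\varepsilon$. This gives an increasing, topologically bounded sequence in $E^{+}$ that does not converge, contradicting (4). So the net is Cauchy, hence convergent because $E$ is complete, which is (5). Replacing your $4\Rightarrow 1$ by $4\Rightarrow 5\Rightarrow 1$ completes your cycle.
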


\begin{proof}
$1\Rightarrow 2:$ since $E$ is a band in $E^{\prime \prime },$ it follows
from \cite[Theorem 22.2]{LSRS} that $\tau $ is Lebesgue (hence as $E$ is $%
\tau $-complete it is Dedekind-complete; see\cite[Theorem 10.3]{LSRS}).
Hence, by \cite[Theorem 3.18]{Hac} 2. holds true.

$2\Rightarrow 3:$ obvious, since in this case every positive operator $%
T:E\rightarrow F$ is continuous.

$3\Rightarrow 4:$ Let $\left( x_{n}\right) \subset E$ be topologically
bounded with $0\leq x_{n}\uparrow .$ Then there exists $x^{\prime \prime
}\in E^{\prime \prime }$ with $0\leq x_{n}\uparrow x^{\prime \prime }.$ To
end the proof we show that $\left( x_{n}\right) $ is a $\tau $-Cauchy
sequence. Let $\left( q_{k}\right) $ be a sequence of lattice seminorms on $%
E $ inducing the topology $\tau .$ For $k\in \mathbb{N}$, consider the
normed Riesz space $\left( E/q_{k}^{-1}\left( 0\right) ,\left\Vert
.\right\Vert _{k}\right) $ under the norm $\left\Vert \pi \left( x\right)
\right\Vert _{k}=q_{k}\left( x\right) ,$ where $\pi :E\rightarrow
E/q_{k}^{-1}\left( 0\right) $ is the canonical surjection, and complete it
to the Banach lattice $F.$ Define then the positive operator $T:E\rightarrow
F$ by $T=i\circ \pi $ where $i:E/q_{k}^{-1}\left( 0\right) \rightarrow F$ is
the natural embedding. By hypthesis, $T$ is gbwc, hence it follows from \cite%
[Corollaries 3.20 and 3.4]{Hac} that the sequence $\left( \pi \left(
x_{n}\right) \right) $ converges in $F,$ or $\left( \pi \left( x_{n}\right)
\right) $ is norm-Cauchy in $E/q_{k}^{-1}\left( 0\right) .$ That is, $\left(
x_{n}\right) $ is $q_{k}$-Cauchy in $E.$ Since the latter fact holds for
every $k,$ then $\left( x_{n}\right) $ is $\tau $-Cauchy, as desired.

$4\Rightarrow 5:$ Follows from the same argument of the Banach lattice case;
see the remark after \cite[Definition 4.58]{POS.OPR}.

$5\Rightarrow 4:$ Follows from \cite[Theorem 22.2]{LSRS}.
\end{proof}

\section{KR-spaces\label{sec KR}}

One of the well-known notions of Banach lattice theory is that of a
KB-space. It is said that a Banach lattice $E$ is a KB-space if each
increasing topologically bounded sequence $\left( x_{n}\right) \subset E^{+}$
is convergent. Reflexive Banach lattices and Lebesgue $L_{1}\left( \mu
\right) $-spaces are examples of such spaces.

In the spirit of the KB-space notion, we introduce here the counterpart
notion for locally convex-solid Riesz spaces.

\begin{definition}
Let $E$ be a locally convex-solid Riesz space. Then, we say that $E$ is a
KR-space if every increasing and topologically bounded net $\left( x_{\alpha
}\right) \subset E^{+}$ is convergent.

A Fr\'{e}chet (resp. Banach) lattice which is also a KR-space is termed a
KF- (resp. KB-) space.
\end{definition}

Note that this notion of KR-spaces was implicitly studied in \cite[Sec. 22]%
{LSRS}. Here, we have introduced and studied it explicitly in order to state
the factorization theorem for gbwc operators on a locally convex-solid Riesz
space in analogy with that of $b$-weakly compact operators on a Banach
lattice.

From \cite[Theorem 22.2]{LSRS} and our main Theorem \ref{th charac KF}, we
have immediately the following result.

\begin{theorem}
\label{th charac KR}For a locally convex-solid Riesz space $E,$ the
following assertions are equivalent:

\begin{enumerate}
\item $E$ is a KR-space.

\item $E$ is a band in $E^{\prime \prime }.$

\item $E$ has both Levi and Lebesgue properties.

\item $E$ is $\left\vert \sigma \right\vert \left( E,E^{\prime }\right) $%
-complete.
\end{enumerate}

If furthermore or $E$ is a Fr\'{e}chet lattice, then we may add

\begin{enumerate}
\item[(5)] $E$ is a KF-space.

\item[(6)] Each cotinuous operator $T:E\rightarrow X$ into an arbitrary
Banach space $X$ is gbwc.

\item[(7)] Each positive operator $T:E\rightarrow F$ into an arbitrary
Banach lattice $F$ is gbwc.

\item[(8)] Each increasing and topologically bounded sequence $\left(
x_{n}\right) \subset E^{+}$ is convergent.
\end{enumerate}
\end{theorem}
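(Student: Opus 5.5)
The plan is to derive Theorem \ref{th charac KR} almost entirely by bookkeeping from \cite[Theorem 22.2]{LSRS} and Theorem \ref{th charac KF}, without any new estimates.

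For a general locally convex-solid Riesz space $E$, \cite[Theorem 22.2]{LSRS} already gives the equivalence of three conditions:
\begin{itemize}
\item $E$ is a band in $E^{\prime\prime}$;
\item $E$ has both the Levi and the Lebesgue properties;
\item $E$ is $\left\vert \sigma\right\vert(E,E^{\prime})$-complete;
\item every increasing, topologically bounded net $(x_\alpha)\subset E^{+}$ is $\tau$-convergent.
\end{itemize}
The last condition is exactly the definition of a KR-space. So I will first restate precisely which list of equivalent conditions appears in that theorem. I will then match each of (1)--(4) to an item of that list and conclude $(1)\Leftrightarrow(2)\Leftrightarrow(3)\Leftrightarrow(4)$.

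If the cited theorem phrases the net condition differently (for instance, via order convergence in $E^{\prime\prime}$ together with topological convergence), I will supply the two short links directly.
\begin{itemize}
\item For $(1)\Rightarrow(3)$: Levi is immediate, since a convergent increasing net converges to its supremum in a Hausdorff locally solid space. For Lebesgue, take $x_\alpha\downarrow 0$ with $x_\alpha\le x_{\alpha_0}$. The net $x_{\alpha_0}-x_\alpha$ is increasing and bounded, so it converges, and its limit must be its supremum $x_{\alpha_0}$. Hence $x_\alpha\to 0$.
\item For $(3)\Rightarrow(1)$: Levi gives a supremum $x$ of the increasing bounded net. Then $x-x_\alpha\downarrow 0$, and the Lebesgue property yields convergence.
\end{itemize}
These steps avoid relying on the exact wording of \cite[Theorem 22.2]{LSRS} for this link.

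In the Fréchet case the additional items follow from Theorem \ref{th charac KF}. That theorem shows that, for a Fréchet lattice, being a band in $E^{\prime\prime}$ is equivalent to (6), (7), (8), and to the net convergence condition. Item (5) is just (1) restated for Fréchet lattices, by the definition of a KF-space. Since (2) coincides with condition 1 of Theorem \ref{th charac KF}, all of (1)--(8) are chained together.

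The only point requiring care is the first step: confirming that the list in \cite[Theorem 22.2]{LSRS} covers the definition of a KR-space with nets and topological convergence, and not some weaker mode of convergence. If it does not, the Levi/Lebesgue argument above closes the gap between (1) and (3). The remaining equivalences then hold purely by citation.
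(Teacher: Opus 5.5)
Your proposal is correct and takes the same route as the paper: the paper gets (1)--(4) directly from \cite[Theorem 22.2]{LSRS} and the Fr\'{e}chet items (5)--(8) from Theorem \ref{th charac KF}, chained exactly as you do. Your direct Levi/Lebesgue argument for $(1)\Leftrightarrow(3)$ is sound; it relies on the closed positive cone, so that a convergent increasing net converges to its supremum, and it only makes explicit what the paper leaves inside the citation. The one slip is harmless: you announce ``three conditions'' and then list four.
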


Therefore, a KR space has necessarily BOB and Lebesgue properties. Also, by 
\cite[Theorem 22.2]{LSRS} a KR space is necessarily topologically complete
(hence Dedekind complete). Hence, KR-spaces are a natural extension of
KB-spaces.

Also, by Theorem \ref{th charac KR} and \cite[Theorem 6]{EMA} we have the
following result.

\begin{theorem}
Let $E$ be a locally convex-solid Riesz space such that the canonical
embedding $E\hookrightarrow E^{\prime \prime }$ is continuous. Then, $%
E^{\prime }$ is a KR-space iff its topology is Lebesgue.
\end{theorem}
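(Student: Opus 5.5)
The plan is to deduce the statement from Theorem \ref{th charac KR}, which says $E^{\prime }$ is a KR-space iff it has both the Levi and the Lebesgue properties, i.e. iff it is a band in its strong bidual. We combine this with \cite[Theorem 6]{EMA}. The hypothesis that the canonical embedding $E\hookrightarrow E^{\prime \prime }$ is continuous is exactly what makes \cite[Theorem 6]{EMA} applicable. Our reading of that result is that, under this hypothesis, the strong dual $E^{\prime }$ has the BOB-property, and also the Levi property: topologically bounded increasing positive nets in $E^{\prime }$ are order bounded and have suprema. This reading is consistent with how the paper already used it in the corollary on adjoints $T^{\prime }$, where gbwc and owc were shown to coincide via the BOB-property.

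\emph{Necessity.} If $E^{\prime }$ is a KR-space, then by Theorem \ref{th charac KR} ($1\Rightarrow 3$) it has the Lebesgue property. Since $E^{\prime }$ carries its strong topology $\beta (E^{\prime },E)$, this says that topology is Lebesgue. This direction needs no hypothesis.

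\emph{Sufficiency.} Assume $\beta (E^{\prime },E)$ is Lebesgue. Let $0\leq f_{\alpha }\uparrow $ be a topologically bounded net in $E^{\prime }$. First I would show that $(f_{\alpha })$ has a supremum in $E^{\prime }$, i.e. that $E^{\prime }$ has the Levi property.
\begin{itemize}
\item Directly: $E^{\prime }$ is Dedekind complete (it is an ideal of $E^{\sim }$). Continuity of the embedding makes the equicontinuity/boundedness of $(f_{\alpha })$ controlled on bounded sets. So the pointwise limit $f(x)=\lim f_{\alpha }(x)$ for $x\in E^{+}$ exists, being a bounded increasing real net, and defines an element of $E^{\prime }$ with $f=\sup f_{\alpha }$.
\item Alternatively: obtain order boundedness from the BOB-property in \cite[Theorem 6]{EMA} and then use Dedekind completeness.
\end{itemize}
Then $f_{\alpha }\uparrow f$, and the Lebesgue property gives $f_{\alpha }\rightarrow f$ in $\beta (E^{\prime },E)$. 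Hence $E^{\prime }$ is a KR-space by definition, or equivalently via condition 3 of Theorem \ref{th charac KR}.

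The main obstacle is confirming that the continuous-embedding hypothesis really yields that the supremum $f$ exists \emph{in} $E^{\prime }$, that is, that $f$ is $\tau $-continuous. I would handle this by citing \cite[Theorem 6]{EMA} for the BOB/Levi property of $E^{\prime }$, and if needed check it directly. Continuity of $E\rightarrow E^{\prime \prime }$ means that for a topologically bounded set $\{f_{\alpha }\}$, its polar is a $\tau $-neighbourhood of zero. Then $|f(x)|\leq \sup_{\alpha }|f_{\alpha }|(|x|)\leq 1$ on a solid $\tau $-neighbourhood, which gives continuity of $f$.
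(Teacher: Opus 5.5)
Your proposal is correct and follows the paper's route: the paper obtains this theorem in one line by combining Theorem \ref{th charac KR} (KR $\Leftrightarrow$ Levi and Lebesgue) with \cite[Theorem 6]{EMA}, which under continuity of $E\hookrightarrow E^{\prime\prime}$ gives the BOB-property of $E^{\prime}$, and hence the Levi property since $E^{\prime}$ is Dedekind complete. Your direct check is a valid self-contained replacement for that citation: continuity of the embedding makes $\beta(E^{\prime},E)$-bounded sets equicontinuous, so the pointwise supremum $f$ satisfies $|f|\leq 1$ on a solid $\tau$-neighbourhood and therefore lies in $E^{\prime}$.
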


\begin{corollary}
The strong dual of a Fr\'{e}chet lattice is a KR-space iff its topology is
Lebesgue.
\end{corollary}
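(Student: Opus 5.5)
The corollary follows from the preceding theorem once we check that its hypothesis holds when $E$ is a Fr\'{e}chet lattice. That hypothesis is that the canonical embedding $E\hookrightarrow E^{\prime \prime }$ is continuous, where $E^{\prime \prime }$ carries the strong topology $\beta \left( E^{\prime \prime },E^{\prime }\right) $. Granting this, the preceding theorem applies directly: the strong dual $E^{\prime }$ is a KR-space iff $\beta \left( E^{\prime },E\right) $ is Lebesgue, which is exactly the assertion. So the plan has one real step, namely proving that continuity.

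A neighborhood base of zero for $\beta \left( E^{\prime \prime },E^{\prime }\right) $ consists of the polars $B^{\circ }\subset E^{\prime \prime }$, where $B$ runs over the $\beta \left( E^{\prime },E\right) $-bounded subsets of $E^{\prime }$. We need, for each such $B$, a $\tau $-neighborhood $U$ of zero in $E$ with $U\subset B^{\circ }$, i.e. $\left\vert f(x)\right\vert \leq 1$ for all $x\in U$ and $f\in B$. This holds exactly when $B$ is equicontinuous. So the task reduces to showing that every strongly bounded subset of $E^{\prime }$ is equicontinuous, which is the statement that $E$ is infrabarrelled (quasibarrelled).

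This is where the Fr\'{e}chet hypothesis enters: a Fr\'{e}chet space is metrizable and complete, hence barrelled by the Baire category theorem. In a barrelled space, even $\sigma \left( E^{\prime },E\right) $-bounded sets are equicontinuous, by the Banach--Steinhaus theorem, and in particular every $\beta \left( E^{\prime },E\right) $-bounded set is. Alternatively, metrizability alone suffices, since metrizable locally convex spaces are bornological and hence quasibarrelled. Either way, $E\hookrightarrow E^{\prime \prime }$ is continuous, and in fact a topological embedding onto its image. No lattice structure is needed for this step; the lattice properties only enter through the preceding theorem, whose Riesz-space hypotheses (locally convex-solid) hold for $E$.

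The only point needing care, and the one I expect to be the main obstacle, is the precise form in which \cite[Theorem 6]{EMA} (used in the preceding theorem) requires continuity of the embedding. If it needs continuity into $E^{\prime \prime }$ with the strong topology $\beta \left( E^{\prime \prime },E^{\prime }\right) $, the argument above suffices. If it instead needs the natural topology of uniform convergence on equicontinuous sets, the equicontinuity argument again gives the same conclusion, since for a Fr\'{e}chet space the strongly bounded and equicontinuous subsets of $E^{\prime }$ coincide.
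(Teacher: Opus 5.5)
Your proposal is correct and follows the paper's implicit argument. The corollary comes from applying the preceding theorem once the canonical embedding of a Fr\'{e}chet lattice into its strong bidual $\left( E^{\prime \prime },\beta \left( E^{\prime \prime },E^{\prime }\right) \right) $ is known to be continuous. You justify this correctly: a metrizable locally convex space is quasibarrelled, so every $\beta \left( E^{\prime },E\right) $-bounded set is equicontinuous. This is exactly the form of continuity the paper requires, since its $E^{\prime \prime }$ is the strong bidual.
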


The following consequences are well-known from the literature of Banach
lattice theory, and remain now particular cases of the above theorems.

\begin{corollary}
\label{cor charac KB}For a Banach lattice $E$, the following statements are
equivalent:

\begin{enumerate}
\item $E$ is a KB-space.

\item Each increasing and topologically bounded sequence $\left(
x_{n}\right) \subset E^{+}$ is convergent.

\item $E$ is a band in $E^{\prime \prime }.$

\item The norm of $E$ is both Levi and order continuous.

\item $E$ is $\left\vert \sigma \right\vert \left( E,E^{\prime }\right) $%
-complete.

\item Each continuous operator $T:E\rightarrow X$ into an arbitrary Banach
space $X$ is $b$-weakly compact.

\item Each positive operator $T:E\rightarrow F$ into an arbitrary Banach
lattice $F$ is $b$-weakly compact.

\item The identity operator $T:E\rightarrow E$ is $b$-weakly compact.
\end{enumerate}
\end{corollary}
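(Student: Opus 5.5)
The plan is to read Corollary \ref{cor charac KB} off from Theorem \ref{th charac KR}, which already covers a Fr\'{e}chet lattice, and then add the Banach-lattice-specific item (8). A Banach lattice $E$ is a Fr\'{e}chet lattice, so the equivalence of (1)--(8) of Theorem \ref{th charac KR} applies to it. A KR-space that is a Banach lattice is precisely a KB-space by the Definition, so item (1) here is item (5) of Theorem \ref{th charac KR}. Item (2) is item (8), and item (3) is item (2). Item (5) is item (4).

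For item (4) I would note two translations. The Lebesgue property for a norm topology is exactly order continuity of the norm. The Levi property, meaning that increasing positive norm bounded nets have suprema, is the Levi norm property. So item (4) is item (3) of Theorem \ref{th charac KR}.

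For items (6) and (7) I would use that for a Banach lattice $E$ one has $E^{\sim\sim}=E^{\prime\prime}$. Hence topologically $b$-order bounded sets coincide with $b$-order bounded ones, and gbwc operators on $E$ are exactly the $b$-weakly compact operators, as recorded in the Introduction. Then (6) and (7) become items (6) and (7) of Theorem \ref{th charac KR}. For (7) one also uses that positive operators between Banach lattices are automatically continuous.

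It remains to bring in item (8). The implication (6)$\Rightarrow$(8) is immediate, taking $X=E$ and $T$ the identity, which is continuous. For (8)$\Rightarrow$(2), take $(x_n)\subset E^{+}$ increasing and norm bounded. Since the identity is gbwc, Theorem \ref{Th charc no conditions} (or already Theorem \ref{th charac imp}) gives that $(x_n)=(I x_n)$ is norm convergent in $E$. This is (2). This last step is the only one not purely a matter of translating definitions, and I expect it to be the main point to handle carefully. The only other care needed is matching the notions (Levi/Lebesgue with the norm properties, and gbwc with $b$-weakly compact) precisely, and there I would cite the remarks already in the paper rather than reprove them.
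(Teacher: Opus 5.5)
Your proposal is correct and follows the paper's route: the paper presents this corollary simply as the Banach-lattice specialization of Theorem \ref{th charac KR}, using $E^{\sim\sim}=E^{\prime\prime}$ to identify gbwc with $b$-weakly compact operators and the Lebesgue property with order continuity of the norm. Your handling of item (8) via (6)$\Rightarrow$(8)$\Rightarrow$(2), applying Theorem \ref{th charac imp} to the identity, correctly supplies the one equivalence the paper leaves implicit.
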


\begin{corollary}
The norm dual of a Banach lattice is a KB-space iff its norm is order
continuous.
\end{corollary}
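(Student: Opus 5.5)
The statement to prove is the last corollary: the norm dual $E^{\prime}$ of a Banach lattice $E$ is a KB-space iff its norm is order continuous. The plan is to specialize the preceding theorem on $E^{\prime}$ to the Banach lattice case.

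\emph{Applying the theorem.} A Banach lattice $E$ is in particular a locally convex-solid Riesz space. The canonical embedding $E\hookrightarrow E^{\prime \prime }$ is an isometric lattice embedding, hence continuous. So the theorem above applies and gives: $E^{\prime}$, with its strong topology $\beta \left( E^{\prime },E\right)$, is a KR-space iff that topology is Lebesgue. It remains to translate both sides into Banach lattice language.

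\emph{Translating the two sides.} First, $\beta \left( E^{\prime },E\right)$ is the dual norm topology, since the bounded subsets of $E$ are exactly the norm-bounded ones. So $E^{\prime}$ is a Banach lattice under this topology. By the definition of KF/KB-spaces, "$E^{\prime}$ is a KR-space" then means exactly that $E^{\prime}$ is a KB-space. For consistency with the sequential definition of KB-spaces recalled at the start of this section, I would invoke the equivalence $(1)\Leftrightarrow (5)\Leftrightarrow (8)$ of Theorem \ref{th charac KR} for Fr\'{e}chet lattices (equivalently Corollary \ref{cor charac KB}). Second, for a norm topology, being Lebesgue ($x_{\alpha }\downarrow 0$ implies $\left\Vert x_{\alpha }\right\Vert \rightarrow 0$) is by definition order continuity of the norm. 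Combining the two translations gives the corollary.

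\emph{Main obstacle.} The only delicate point is the identification of KR with KB, i.e.\ nets versus sequences. This is already settled by the Fr\'{e}chet-lattice part of Theorem \ref{th charac KR}, so no real difficulty remains. As an independent check, one can argue directly. If $E^{\prime}$ has order continuous norm, then since a dual Banach lattice always has the Levi (Fatou/monotone completeness) property, item 4 of Corollary \ref{cor charac KB} applies and $E^{\prime}$ is a KB-space. Conversely, every KB-space has order continuous norm.
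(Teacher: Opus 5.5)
Your proposal is correct and follows the paper's route. The paper treats this corollary as a particular case of the preceding theorem on $E^{\prime}$ (equivalently, of its corollary for strong duals of Fr\'{e}chet lattices), and your identifications are the ones it intends: $\beta(E^{\prime},E)$ is the dual norm, Lebesgue means order continuity, and KR reconciles with the sequential KB definition via Theorem \ref{th charac KR}, (1)$\Leftrightarrow$(8).
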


We end this section by providing some examples illustrating the notion of
KR-spaces in the extended setting of (non-Banach) locally convex-solid Riesz
spaces.

By Theorem \ref{th charac KR}, semireflexive locally convex-solid Riesz
spaces are KR-spaces, e.g., $E=\left( l_{1},\left\vert \sigma \right\vert
\left( l_{1},c_{0}\right) \right) ,$ or the following reflexive KF-spaces:
the space $\mathbb{R}^{\mathbb{N}}$ of all real sequences, the space $s$ of
rapidly decreasing real sequences defined by%
\begin{equation*}
s=\left\{ x=\left( x_{n}\right) \in \mathbb{R}^{\mathbb{N}%
}:\lim_{n\rightarrow \infty }n^{k}\left\vert x_{n}\right\vert =0\text{ \ for
every }k\in \mathbb{N}\right\} ,
\end{equation*}%
and endowed with the coordinatewise ordering and the topology generated by
the seminorms%
\begin{equation*}
p_{k}\left( x\right) =(\sum\limits_{n=1}^{\infty }n^{2k}\left\vert
x_{n}\right\vert ^{2})^{\frac{1}{2}},\text{ \ }k=1,2,...
\end{equation*}

\section{Factorization of gbwc operators through KR-spaces}

Now, we are in position to state our first factorization theorem for gbwc
operators on a locally convex-solid\textbf{\ }Riesz space.

\begin{theorem}
\label{th factor KR}Consider a continuous operator $T:E\rightarrow X,$ from
a Dedekind-complete locally convex-solid\textbf{\ }Riesz space with Lebesgue
property into a Banach space. Then, $T$ is gbwc iff it admits a
factorization $SR$ through a KR-space $\left( H,\tau \right) $ with factors $%
R:E\rightarrow H$ and $S:H\rightarrow X$ such that $R$ is a continuous
lattice homomorphism and $S$ is $\left( \tau -\sigma \left( X,X^{\prime
}\right) \right) $-continuous.
\end{theorem}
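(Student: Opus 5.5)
The plan is to prove the two implications separately. Sufficiency goes through the weak compactness of order intervals in a KR-space. Necessity goes through an Aliprantis--Burkinshaw type construction of a Banach lattice from a lattice seminorm determined by $T$. That Banach lattice is then shown to be a KB-space, hence a KR-space by Theorem \ref{th charac KR}.

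\emph{Sufficiency.} Let $T=SR$ as in the statement and let $A\subset E^{+}$ be topologically $b$-order bounded, say $0\le a\le x''$ in $E''$ for all $a\in A$. Let $\mathcal{A}$ be the set of suprema of finite subsets of $A$. It is an upward directed set dominated by $x''$, hence topologically bounded by \cite[Proposition 3.8]{Hac}. Since $R$ is a continuous lattice homomorphism, $R(\mathcal{A})$ is an increasing, topologically bounded net in $H^{+}$. Because $H$ is a KR-space, $R(\mathcal{A})$ converges in $\tau$ to some $h\in H^{+}$, and closedness of the positive cone gives $R(A)\subset R(\mathcal{A})\subset[0,h]$. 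By Theorem \ref{th charac KR}, $H$ has the Lebesgue property and is Dedekind complete. Hence $H'\subset H^{\sim}_{n}$, so $[0,h]$ is $\sigma(H,H^{\sim}_{n})$-compact and therefore $\sigma(H,H')$-compact. Since $S$ is $(\tau,\sigma(X,X'))$-continuous it is weak-to-weak continuous, so $S[0,h]$ is weakly compact in $X$. It follows that $T(A)\subset S[0,h]$ is relatively weakly compact, and $T$ is gbwc.

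\emph{Necessity.} Assume $T$ is gbwc. For $x\in E$ define
\[
\rho(x)=\sup\{|T'x'|(|x|):\ x'\in B_{X'}\}.
\]
Here $|T'x'|$ exists because $E$ is Dedekind complete and $E'$ is an ideal of $E^{\sim}$. The set $T'(B_{X'})$ is equicontinuous since $T$ is continuous, and in a locally convex-solid space the solid hull of an equicontinuous set is again equicontinuous. Hence $\rho$ is a continuous lattice seminorm on $E$, and $\|Tx\|\le\rho(x)$. Let $H$ be the norm completion of $E/\rho^{-1}(0)$; it is a Banach lattice. Let $R$ be the quotient map composed with the embedding into $H$; it is a continuous lattice homomorphism. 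Define $S(Rx)=Tx$, which is well defined and norm bounded, and extend it to $H$ by continuity. Then $S$ is norm continuous, hence weak-to-weak continuous, and $T=SR$.

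It remains to show that $H$ is a KB-space, and this is the main obstacle. Since $|T'x'|\in E'$ and $\tau$ is Lebesgue, each $|T'x'|$ is order continuous. I would first show that $\rho(x_n)\to0$ for every disjoint, topologically $b$-order bounded sequence in $E^{+}$. This should follow from $T$ being gbwc, via the disjoint-sequence characterizations in \cite{Hac} together with Dedekind completeness. I would then argue that $H$ contains no lattice copy of $c_0$, which characterizes KB-spaces. If that route stalls, the alternative is to mimic the proof of Theorem \ref{th charac normed}. There one approximates an increasing norm-bounded sequence in $H^{+}$ by an increasing sequence $z_n\in E^{+}$. The missing step would be to show that $(z_n)$ may be taken topologically $b$-order bounded in $E$, or dominated through the order continuity of the functionals $|T'x'|$. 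Theorem \ref{Th charc no conditions} would then make $(Tz_n)$, and hence $(Rz_n)$, Cauchy for $\rho$. Once $H$ is known to be KB, Theorem \ref{th charac KR} shows that it is a KR-space, which finishes the proof.
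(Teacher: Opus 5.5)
Your sufficiency argument is correct and is essentially the paper's. You use KR convergence of the net of finite suprema where the paper uses the BOB property of $H$. For weak compactness of $[0,h]$ you use Dedekind completeness plus the Lebesgue property of $H$, as in \cite[Theorem 22.1]{LSRS}.

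The necessity part has a genuine gap, and it cannot be closed: the Banach lattice $H$ you build from $\rho$ is in general \emph{not} a KB-space. Take $E=X=c_{0}$ and $Tx=(x_{n}/n)_{n}$. Then $E$ is Dedekind complete with order continuous norm, and $T$ is compact, hence $b$-weakly compact, i.e.\ gbwc. Since $T^{\prime}y=(y_{n}/n)_{n}$ for $y\in l_{1}$, you get $\rho(x)=\sup_{n}|x_{n}|/n$. The map $x\mapsto(x_{n}/n)_{n}$ then identifies $H$ lattice isometrically with $c_{0}$. Concretely, $z_{k}=\sum_{n\leq k}ne_{n}$ is increasing in $E^{+}$ with $\rho(z_{k}-z_{j})=1$ for all $j<k$. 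So $(Rz_{k})$ is increasing and norm bounded in $H$ but does not converge.

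The obstruction is that $\rho$-bounded sequences in $E^{+}$ need not be topologically bounded in $E$; here $\Vert z_{k}\Vert_{\infty}=k$. So the gbwc property of $T$, through Theorem \ref{Th charc no conditions}, says nothing about them. For the same reason your disjoint-sequence route fails. In this example $\rho(x_{n})\to 0$ does hold for disjoint norm bounded sequences, yet $H$ contains the lattice copy of $c_{0}$ spanned by $(ne_{n})$, which is unbounded in $E$. This is exactly why the paper needs the SPIB hypothesis in Theorem \ref{th fact SPIB}, where it does factor through a Banach lattice built from a seminorm of $T$.

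For Theorem \ref{th factor KR} the paper uses no norm coming from $T$ at all. It takes $H=B(E)$, the band generated by $E$ in $E^{\prime\prime}$, with the topology $|\sigma|(E^{\prime\prime},E^{\prime})$ restricted to $B(E)$. Here $R$ is the canonical embedding and $S=T^{\prime\prime}|_{B(E)}$. This $S$ takes values in $X$ by \cite[Theorem 3.18]{Hac}; that is where Dedekind completeness and the Lebesgue property of $E$ enter. Let $(\varphi_{\alpha})$ be a $|\sigma|$-bounded increasing net in $H^{+}$, and choose nets in $E^{+}$ increasing to each $\varphi_{\alpha}$. The set of finite suprema of these nets is $|\sigma|(E,E^{\prime})$-bounded, and its supremum in $E^{\prime\prime}$ dominates $(\varphi_{\alpha})$. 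Dedekind completeness of $B(E)$ then gives $\varphi_{\alpha}\uparrow\psi$, hence $|\sigma|$-convergence.

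Note that the statement only asks $S$ to be $(\tau,\sigma(X,X^{\prime}))$-continuous. That already signals a weak-type topology on the factor space rather than a norm induced by $T$.
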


\begin{proof}
Assume that $T=SR$ with factors $R$ and $S$ being described as above. Let $%
A\subset E$ be a topologically $b$-order bounded set. Without loss of
generality we may assume that $A\uparrow \subset E^{+}.$ Since $R$ is
positive, then $R\left( A\right) \uparrow \subset H^{+}.$ As $A$ is
topologically bounded (see \cite[Proposition 3.8]{Hac}) then so is $R\left(
A\right) .$ Hence, since $H$ has the BOB property then there exists $h\in
H^{+}$ with $R\left( A\right) \subset \lbrack 0,h].$ Now, since $\tau $ is
Lebesgue, by \cite[Theorem 22.1]{LSRS} the order interval $[0,h]$ is weakly
compact in $H$. Hence, since $S$ is also $\left( \sigma \left( H,H^{\prime
}\right) -\sigma \left( X,X^{\prime }\right) \right) $-continuous, then $%
S[0,h]$ is weakly compact. Therefore, $T\left( A\right) \subset S[0,h]$ is
relatively weakly compact as desired.

Conversely, assume that $T:E\rightarrow X$ is gbwc. Then, it follows by \cite%
[Theorem 3.18]{Hac} that $T^{\prime \prime }\left( B\left( E\right) \right)
\subset X.$ Endow $H=B\left( E\right) $ with the locally convex-solid
topology%
\begin{equation*}
\left\vert \sigma \right\vert \left( B\left( E\right) ,E^{\prime }\right)
:=\left\vert \sigma \right\vert \left( E^{\prime \prime },E^{\prime }\right)
\mid _{B\left( E\right) }
\end{equation*}%
and define $S:H\rightarrow X$ by $Sx^{\prime \prime }=T^{\prime \prime
}x^{\prime \prime }.$ It follows that $T=SR$ where $R:E\rightarrow H$ is the%
\textbf{\ }lattice homomorphism defined by $Rx=i\left( x\right) $ and $%
i:E\rightarrow E^{\prime \prime }$ is the canonical embedding. Note that it
is easily seen that $R$ and $S$ satisfy the required continuity conditions.
Now, to end the proof we will show that $H$ is a KR-space. To this end, let $%
\left( \varphi _{\alpha }\right) \subset H$ be $\left\vert \sigma
\right\vert \left( B\left( E\right) ,E^{\prime }\right) $-bounded such that $%
0\leq \varphi _{\alpha }\uparrow $. For each $\alpha \in \left( \alpha
\right) ,$ pick an increasing net $\left( x_{\beta ^{\alpha }}\right)
\subset E$ such that $0\leq x_{\beta ^{\alpha }}\uparrow \varphi _{\alpha }.$
Consider then the set%
\begin{equation*}
A=\left\{ \vee _{i=1}^{n}x_{\beta _{i}^{\alpha _{i}}}:\beta _{i}\in \left(
\beta \right) \text{, }\alpha _{i}\in \left( \alpha \right) \text{, }n\in 
\mathbb{N}\right\} .
\end{equation*}%
Clearly $A$ is an increasing set and for each $n\in \mathbb{N}$ and each $%
f\in E_{+}^{\prime }$ one has%
\begin{equation*}
0\leq f\left( \vee _{i=1}^{n}x_{\beta _{i}^{\alpha _{i}}}\right) \leq
\varphi _{\gamma _{n}}\left( f\right) \leq \sup_{\alpha }\varphi _{\alpha
}\left( f\right) <+\infty ,
\end{equation*}%
where $\gamma _{n}\in \left( \alpha \right) $ is some majorizing element of
the $\alpha _{i}$'s. This shows that $A$ is $\left\vert \sigma \right\vert
\left( E,E^{\prime }\right) $-bounded. It follows by \cite[Theorem 19.17]%
{LSRS} that there exists $\varphi \in E^{\prime \prime }$ with $0\leq
A\uparrow \varphi $ and we have that $\varphi \in B\left( E\right) .$ Since
for each $\alpha $, $0\leq x_{\beta ^{\alpha }}\uparrow \varphi _{\alpha }$
and $0\leq x_{\beta ^{\alpha }}\leq \varphi $ for each $\beta $, we see that 
$0\leq \varphi _{\alpha }\leq \varphi $ for each $\alpha .$ Since $B\left(
E\right) $ is Dedekind complete, there exists $\psi \in B\left( E\right) $
with $0\leq \varphi _{\alpha }\uparrow \psi $. Therefore, $\varphi _{\alpha
}\rightarrow \psi $ for $\left\vert \sigma \right\vert \left( B\left(
E\right) ,E^{\prime }\right) .$ That is $H$ is a KR-space, as desired.
\end{proof}

Can the space of factorization in the above theorem be refined to a
KB-space? The following theorem provides us a partial answer in the case of
normed Riesz spaces, extending then the factorization theorem for $b$-weakly
compact operators on Banach lattices.

\begin{theorem}
\label{th fact ocn}A continuous operator $T:E\rightarrow X,$ from a
Dedekind-complete normed Riesz space with order continuous norm into a
Banach space is gbwc iff $T$ admits a factorization $SR$ through a KB-space $%
H$ with a continuous interval preserving lattice homomorphism factor $%
R:E\rightarrow H$ and a continuous factor $S:H\rightarrow X.$
\end{theorem}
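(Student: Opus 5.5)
The natural route is to pass to the norm completion and apply the known Banach lattice factorization. By Theorem \ref{th charac normed}, $T$ is gbwc if and only if its continuous extension $\widehat{T}:\widehat{E}\rightarrow X$ is $b$-weakly compact. Since $E$ has order continuous norm and is Dedekind-complete, I expect $\widehat{E}$ to have order continuous norm. The reason is that a Dedekind-complete normed Riesz space with order continuous norm is an ideal in its completion, and on such an ideal the norm of $\widehat{E}$ is order continuous. The operators available for the factorization then come from \cite[Proposition 2]{Altin}: a continuous operator from a Banach lattice with order continuous norm into a Banach space is $b$-weakly compact if and only if it factors through a KB-space. I also want the first factor to be an interval preserving lattice homomorphism. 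The construction in that reference, and in the classical Aliprantis--Burkinshaw factorization, gives exactly such a factor: $H$ is obtained by completing $\widehat{E}/N$ under a lattice seminorm built from $\widehat{T}$ and the solid hull of the unit ball, and the quotient map $Q:\widehat{E}\rightarrow H$ is continuous, a lattice homomorphism, and has dense range.

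For the forward direction, let $T$ be gbwc. Then $\widehat{T}=S Q$ with $H$ a KB-space and $S:H\rightarrow X$ continuous, and I put $R=Q|_{E}$. This $R$ is a continuous lattice homomorphism, since $E$ is a sublattice of $\widehat{E}$. The step I expect to be the main obstacle is showing that $R$ is interval preserving. If the construction makes $Q$ interval preserving, as in the proof of \cite[Theorem 5.41]{POS.OPR}, where the range is an ideal and $Q[0,x]=[0,Qx]$, then $R$ inherits this property. The point is that for $x\in E^{+}$ the interval $[0,x]$ taken in $\widehat{E}$ lies inside $E$, because $E$ is an ideal of $\widehat{E}$. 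Establishing this ideal property is where Dedekind completeness and order continuity of $E$ are genuinely used. I would argue as follows: for $0\leq y\leq x$ with $y\in\widehat{E}$, choose $y_{n}\in E$ with $y_{n}\rightarrow y$, and replace them by $(y_{n}^{+}\wedge x)$. This sequence is order bounded in $E$ and converges in norm to $y$. Dedekind completeness and order continuity of $E$ should then give that it is order convergent in $E$, so $y\in E$. If this turns out to be delicate, the fallback is to define $H$ directly from $E$ using the lattice seminorm $\rho(x)=\sup\{\Vert Tz\Vert:|z|\leq|x|\}$ and verify the KB property using Theorem \ref{Th charc no conditions}.

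For the converse, let $T=SR$ as in the statement, and let $A\subset E$ be topologically $b$-order bounded. Since $R$ is continuous, $R''$ is a positive map from $E''$ into $H''$ extending $R$. Hence $R(A)$ is order bounded in $H''$, that is, it is $b$-order bounded in $H$. Because $H$ is a KB-space, it is a band in $H''$ and has property (b), so $R(A)\subset[0,h]$ for some $h\in H^{+}$. This interval is weakly compact because the norm of $H$ is order continuous. Therefore $T(A)\subset S[0,h]$ is relatively weakly compact. Alternatively, one can use Theorem \ref{Th charc no conditions} directly: an increasing norm-bounded sequence in $E^{+}$ is sent by $R$ to an increasing norm-bounded sequence in $H^{+}$, which converges because $H$ is a KB-space, and then $S$ carries it to a convergent sequence. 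Either argument shows $T$ is gbwc. This direction does not need the interval preserving property at all.
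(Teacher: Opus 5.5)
Your proposal is correct and follows the paper's proof. You pass to $\widehat{E}$, factor the $b$-weakly compact extension $\widehat{T}$ through a KB-space using the Banach lattice factorization theorem, and restrict the interval preserving lattice homomorphism to $E$; this is legitimate because $E$ is an ideal of $\widehat{E}$. The paper obtains the ideal property from $E$ being an ideal of $E''$ with $\widehat{E}$ its norm closure there. Your direct argument also works once the approximants are chosen with summable increments: then the truncated sequence is order convergent in $E$ and, by order continuity, norm convergent to its order limit. For the converse, the paper simply invokes Theorem \ref{th factor KR}, which amounts to your property (b) argument, and your alternative via Theorem \ref{Th charc no conditions} is equally valid.
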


\begin{proof}
The if part: In this case the factor $S$ is also continuous from $H$ into $%
\left( X,\sigma \left( X,X^{\prime }\right) \right) $ and the result follows
from Theorem \ref{th factor KR}.

For the converse, let $\widehat{E}$ be the norm completion of $E$. Thus, by
Theorem \ref{Th charc no conditions} the continuous extension $\widehat{T}:%
\widehat{E}\rightarrow X$\ of $T$ is $b$-weakly compact. As the norm of $%
\widehat{E}$ is order continuous (see \cite[Theorem 10.6]{LSRS}), then there
exists by the factorization theorem a factorization $\widehat{T}=SR$ through
a KB-space $H$ with an interval preserving lattice homomorphism factor $R:%
\widehat{E}\rightarrow H$ and a continuous factor $S:H\rightarrow X.$ Since $%
\widehat{E}$ is the norm closure of $E$ in $E^{\prime \prime }$ and $E$ is
an ideal in $E^{\prime \prime }$ (see \cite[Theorem 22.1]{LSRS}), then the
restriction $R_{1}:E\rightarrow H$ of $R$ to $E$ is likewise a continuous
interval preserving lattice homomorphism. Therefore, $T=SR_{1}$ is the
desired factorization.
\end{proof}

Now, to factorize operators on locally convex-solid Riesz spaces with no
additional conditions on the domain space, we need to introduce the
following inverse boundedness property for these operators.

\begin{definition}
\label{def SPIB}An operator $T:E\rightarrow X$ from a locally convex-solid
Riesz space into a locally convex space is said to have the sequential
positive inverse boundedness (SPIB) property if for each sequence $\left(
x_{n}\right) \subset E^{+},$ $\left( Tx_{n}\right) $ is topologically
bounded in $X$ implies $\left( x_{n}\right) $ is so in $E.$
\end{definition}

The identity operator $I:\left( c_{0},\left\vert \sigma \right\vert \left(
c_{0},l_{1}\right) \right) \rightarrow \left( c_{0},\left\Vert .\right\Vert
_{\infty }\right) $ has SPIB but is discontinuous. Indeed, as $\left\vert
\sigma \right\vert \left( c_{0},l_{1}\right) $ is consistent with the Riesz
duality $\left\langle c_{0},l_{1}\right\rangle $, then $\left\vert \sigma
\right\vert \left( c_{0},l_{1}\right) $-bounded sets and norm-bounded sets
of $c_{0}$ are the same.

Furthermore, the following two examples show that gbwc operators and
operators with SPIB are independent.

\begin{example}
Let $T:C[0,1]\rightarrow \mathbb{R}$ be the positive operator defined by $%
T(f)=\int_{0}^{1}f(x)dx$ for each $f\in C[0,1].$ If $(f_{n})\subset C[0,1]$
is norm-bounded such that $0\leq f_{n}\uparrow $, then $(Tf_{n})$ is
increasing and bounded in $\mathbb{R}$, hence it is convergent. This shows
by Theorem \ref{Th charc no conditions} that $T$ is $b$-weakly compact.
Consider then the sequence $(f_{n})\subset C[0,1]$ defined by%
\begin{equation*}
f_{n}(x)=\left\{ 
\begin{array}{ccc}
\sqrt{n} &  & 0\leq x\leq \frac{1}{n} \\ 
\frac{1}{\sqrt{x}} &  & \frac{1}{n}\leq x\leq 1%
\end{array}%
\right. 
\end{equation*}%
for each $n\geq 2.$ Hence, it is clear that $(Tf_{n})$ is bounded in $%
\mathbb{R}$ but $(f_{n})$ is norm-unbounded in $C[0,1].$ This shows that $T$
fails SPIB.
\end{example}

\begin{example}
Let $T:\left( c_{00},\left\Vert .\right\Vert _{\infty }\right) \rightarrow 
\mathbb{R}$ be defined by%
\begin{equation*}
Tx=\sum_{n=1}^{\infty }x_{n},\text{ \ }x=\left( x_{n}\right) \in c_{00}.
\end{equation*}

For every $x=\left( x_{n}\right) \in c_{00}^{+},$ one has 
\begin{equation*}
\left\Vert x\right\Vert _{\infty }=\sup_{n}x_{n}\leq \sum_{n=1}^{\infty
}x_{n}=\left\vert Tx\right\vert .
\end{equation*}

This shows clearly that $T$ satisfies SPIB. Now, for $n\in \mathbb{N}$,
consider the sequence $\left( u_{n}\right) \subset c_{00}^{+}$ defined by $%
u_{n}=\sum_{k=1}^{n}e_{k},$ where $e_{n}\in c_{00}$ stands for the nth unit
vector, for each $n.$ It follows then that $\left( u_{n}\right) $ is an
increasing and topologically bounded sequence such that $\left(
Tu_{n}\right) =\left( n\right) $ does not converge in $\mathbb{R}$. This
shows by Theorem \ref{Th charc no conditions} that $T$ is not gbwc.
\end{example}

To state our following main result, we need the following lemmas.

\begin{lemma}
\label{lem inj}If $T:E\rightarrow X$ is an operator from a locally
convex-solid Riesz space into a locally convex space such that $T$ has SPIB,
then $N_{T}\cap E^{+}=\left\{ 0\right\} ,$ where $N_{T}$ stands for the null
space of $T.$

\begin{proof}
Let $x\in E^{+}$ such that $Tx=0.$ If $x>0$ then the sequence $x_{n}=nx$ is
topologically unbounded in $E$ but $Tx_{n}=0$ for every $n,$ which
contradicts the SPIB property of $T.$
\end{proof}
\end{lemma}

\begin{lemma}
\label{lem Rsemin}Let $T:E\rightarrow X$ be a continuous operator from a
locally convex-solid Riesz space into a Banach space. Then, $%
q_{T}:E\rightarrow \mathbb{R}^{+}$ defined by%
\begin{equation*}
q_{T}\left( x\right) =\sup \left\{ \left\Vert Ty\right\Vert :\left\vert
y\right\vert \leq \left\vert x\right\vert \right\}
\end{equation*}

is a Riesz seminorm on $E$ such that $\left\Vert Tx\right\Vert \leq
q_{T}\left( x\right) $ for every $x\in E.$

\begin{proof}
Follows by the same arguments of \cite[proof of Theorem 5.58]{POS.OPR} for
the Banach lattice setting.
\end{proof}
\end{lemma}

\begin{lemma}
\label{lem demiequiv}Let $T:E\rightarrow X$ be a continuous operator from a
locally convex-solid Riesz space into a Banach space such that $T$ has SPIB.
Then, there exists $c>0$ with $q_{T}\left( x\right) \leq c\left\Vert
Tx\right\Vert $ for every $x\in E^{+}.$

\begin{proof}
If $x\in E^{+}$ is such that $\left\Vert Tx\right\Vert =0,$ then Lemma \ref%
{lem inj} shows that the desired inequality holds true. So, it suffices to
prove the existence of $c>0$ such that for every $x\in E^{+},$ $\left\Vert
Tx\right\Vert =1$ implies $q_{T}\left( x\right) \leq c.$ Otherwise, there
exists a sequence $\left( x_{n}\right) \subset E^{+}$ with $\left\Vert
Tx_{n}\right\Vert =1$ and $q_{T}\left( x_{n}\right) \geq n$ for every $n.$
Hence there exists a sequence $\left( y_{n}\right) \subset E$ with $%
\left\vert y_{n}\right\vert \leq x_{n}$ and $\left\Vert Ty_{n}\right\Vert
\geq \frac{1}{2}q_{T}\left( x_{n}\right) \geq \frac{1}{2}n$ for every $n.$
Let the sequence $\left( z_{n}\right) \subset E$ be defined by $z_{n}=\frac{%
y_{n}}{\left\Vert Ty_{n}\right\Vert }.$ It follows that $\left\vert
z_{n}\right\vert \leq \frac{x_{n}}{\left\Vert Ty_{n}\right\Vert }$ for every 
$n.$ Since $T$ has SPIB then $\left( x_{n}\right) $ is topologically bounded
in $E.$ It follows that $\frac{x_{n}}{\left\Vert Ty_{n}\right\Vert }%
\rightarrow 0$ as $n\rightarrow \infty ,$ for $\left\Vert Ty_{n}\right\Vert
\rightarrow \infty $ as $n\rightarrow \infty .$ Hence $z_{n}\rightarrow 0$
as $n\rightarrow \infty .$ As $T$ is continuous then $Tz_{n}\rightarrow 0$
as $n\rightarrow \infty .$ But this contradicts $\left\Vert
Tz_{n}\right\Vert =1$ for every $n.$
\end{proof}
\end{lemma}

Finally, we will need the following known results from the literature.
Recall first from \cite{LSRS} that a locally solid Riesz space $E$ is said
to satisfy the B-property if each increasing and topologically bounded
sequence $\left( x_{n}\right) \subset E^{+}$ is a Cauchy sequence.

\begin{theorem}[{\protect\cite[Theorem 18.5]{LSRS}}]
\label{th BP KR}The completion $\widehat{E}$ of a metrizable locally
convex-solid Riesz space $E$ is a KF-space iff $E$ has the B-property.
\end{theorem}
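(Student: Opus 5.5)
We need to prove Theorem \ref{th BP KR}: for a metrizable locally convex-solid Riesz space $E$, the completion $\widehat{E}$ is a KF-space iff $E$ has the B-property. The completion of a metrizable locally convex-solid Riesz space is a Fr\'{e}chet lattice: the lattice operations are uniformly continuous, so they extend, and the extended topology is again locally solid. So $\widehat{E}$ is a Fr\'{e}chet lattice, and by Theorem \ref{th charac KR} (condition (8)) it is a KF-space iff every increasing topologically bounded sequence in $\widehat{E}^{+}$ converges. Since $\widehat{E}$ is complete, this is the same as saying $\widehat{E}$ has the B-property. The plan is therefore to show that $E$ has the B-property iff $\widehat{E}$ has it.

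\emph{Necessity.} Assume $\widehat{E}$ is KF and let $(x_n)\subset E^{+}$ be increasing and topologically bounded in $E$. Fix a countable family of Riesz seminorms $(q_k)$ defining $\tau$; their extensions $\widehat{q_k}$ define the topology of $\widehat{E}$. Then $(x_n)$ is bounded in $\widehat{E}$, so it converges there, and hence it is Cauchy in $E$. Thus $E$ has the B-property.

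\emph{Sufficiency.} Assume $E$ has the B-property, and let $(x_n)\subset\widehat{E}^{+}$ be increasing and bounded. I would mimic the approximation argument in the proof of Theorem \ref{th charac normed} ($2\Rightarrow 3$), done seminorm by seminorm with a diagonal choice.
\begin{enumerate}
\item Since $E^{+}$ is dense in $\widehat{E}^{+}$ (via $x\mapsto x^{+}$), pick $y_n\in E^{+}$ with $\widehat{q_k}(x_n-y_n)<2^{-n}$ for all $k\le n$.
\item Put $z_n=\vee_{i\le n}y_i\in E^{+}$, which is increasing. By \cite[Lemma 3.2]{HajjiMah},
\[
\widehat{q_k}(x_n-z_n)\le\sum_{i\le n}\widehat{q_k}(x_i-y_i)\le C_k+1
\]
uniformly in $n$, with $C_k=\sum_{i<k}\widehat{q_k}(x_i-y_i)$. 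Since $(x_n)$ is bounded, this shows $(z_n)$ is bounded in $E$.
\item By the B-property, $(z_n)$ is $\tau$-Cauchy, so it converges in $\widehat{E}$ to some $z$.
\item It remains to show $x_n\to z$.
\end{enumerate}

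The main obstacle is step 4: the estimate in step 2 only gives $\widehat{q_k}(x_n-z_n)$ bounded, not small. I would fix this with a tail version. For each $m$, form $z^{(m)}_n=x_m\vee\bigvee_{m<i\le n}y_i$ for $n\ge m$. Then $\widehat{q_k}(x_n-z^{(m)}_n)\le\sum_{i>m}2^{-i}=2^{-m}$ for $k\le m$. Next, write $z^{(m)}_n=x_m+\bigl(\bigvee_{m<i\le n}(y_i-x_m)^{+}\bigr)$ with the bracketed term approximated by elements of $E^{+}$. Equivalently, apply the B-property in $E$ to an increasing sequence $w_n\in E^{+}$ that is within $2^{-m}$ of $z^{(m)}_n$ in $\widehat{q_k}$. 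This shows that $(x_n)$ lies eventually within $2\cdot 2^{-m}$ of a Cauchy sequence for $\widehat{q_k}$, $k\le m$, so $(x_n)$ is $\widehat{q_k}$-Cauchy for each $k$. Completeness of $\widehat{E}$ then gives convergence, so $\widehat{E}$ has the B-property and is a KF-space.
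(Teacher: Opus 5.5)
Your proof is correct in outline, but step 4 asserts rather than proves its key point. Your route also differs from the paper's, which gives no proof and simply quotes \cite[Theorem 18.5]{LSRS}. You derive the statement from the paper's own tools:
\begin{itemize}
\item Theorem \ref{th charac KR}(8) reduces ``$\widehat{E}$ is a KF-space'' to ``$\widehat{E}$ has the B-property''. This is valid, since $\widehat{E}$ is a Fr\'{e}chet lattice and Cauchy sequences converge there.
\item Necessity is then immediate.
\item Sufficiency is the Luxemburg approximation from the proof of Theorem \ref{th charac normed} ($2\Rightarrow 3$), run over countably many seminorms.
\end{itemize}
This self-contained route shows where metrizability is used. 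With countably many seminorms, one sequence $(y_n)$ can be chosen eventually close to $(x_n)$ in every $\widehat{q_j}$, and that is what keeps $(z_n)$ bounded in $E$.

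The soft spot is step 4. The sentence ``apply the B-property in $E$ to an increasing sequence $w_n\in E^{+}$ within $2^{-m}$ of $z^{(m)}_n$'' asserts exactly what has to be proved. Likewise, ``approximating the bracketed term by elements of $E^{+}$'' does not say how to keep the approximants increasing, bounded in $E$, and uniformly close in $n$.

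The missing observation is that $z^{(m)}_n$ involves only one element of $\widehat{E}$, namely $x_m$. Take a single $v\in E^{+}$ with $\widehat{q_k}(x_m-v)<2^{-m}$ for $k\le m$, and put $w_n=v\vee\bigvee_{m<i\le n}y_i$.
\begin{itemize}
\item By Birkhoff's inequality $|a\vee b-c\vee b|\le|a-c|$, you get $\widehat{q_k}(w_n-z^{(m)}_n)\le\widehat{q_k}(v-x_m)<2^{-m}$ for all $n>m$ and $k\le m$.
\item From $0\le w_n\le v+z_n$, step 2 shows $(w_n)$ is bounded in $E$.
\end{itemize}
So the B-property applies, and with this your argument is complete.

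You could also drop the tail device entirely. The obstacle in step 4 arises only because $(y_n)$ was chosen independently of the target seminorm. Fix $k$ and $\varepsilon>0$, and choose $y_n\in E^{+}$ with $\widehat{q_j}(x_n-y_n)<\varepsilon 2^{-n}$ for all $j\le\max(n,k)$. Then:
\begin{itemize}
\item $\widehat{q_k}(x_n-z_n)<\varepsilon$ for every $n$;
\item $(z_n)$ is still bounded in $E$;
\item the B-property gives $\widehat{q_k}(x_n-x_{n'})<3\varepsilon$ for large $n,n'$, exactly as in the normed case.
\end{itemize}
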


Our following main result reads now as follows.

\begin{theorem}
\label{th fact SPIB}Let $T:E\rightarrow X$ be a continuous operator from a
locally convex-solid Riesz space into a Banach space such that $T$ has SPIB.
Then, $T$ is gbwc iff it admits a factorization $SR$ through a KB-space $H$
with a continuous interval preserving lattice homomorphism factor $%
R:E\rightarrow H$ and a continuous factor $S:H\rightarrow X.$ If furthermore 
$X$ is a Banach lattice, then $S$ is positive whenever $T$ is so.
\end{theorem}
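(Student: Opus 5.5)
The plan is to build the KB-space $H$ from the Riesz seminorm $q_T$ of Lemma \ref{lem Rsemin}. First, the "if" part. Suppose $T=SR$ with $H$ a KB-space, $R$ a continuous lattice homomorphism and $S$ continuous. A norm-continuous $S$ is also $(\|\cdot\|-\sigma(X,X'))$-continuous, and a KB-space is a KR-space with Lebesgue norm. So the argument of the "if" part of Theorem \ref{th factor KR} applies verbatim; it used only that $R$ is positive and continuous, and that $H$ has BOB and Lebesgue properties. Hence $T$ is gbwc. Alternatively one can argue sequentially: if $0\le x_n\uparrow$ is topologically bounded in $E$, then $(Rx_n)$ is increasing and norm bounded in $H$, hence convergent, so $Tx_n=SRx_n$ converges, and Theorem \ref{Th charc no conditions} finishes.

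For the "only if" part, assume $T$ is gbwc with SPIB. By Lemma \ref{lem Rsemin}, $q_T$ is a Riesz seminorm with $\|Tx\|\le q_T(x)$. Let $N=q_T^{-1}(0)$, an ideal. Put $G=E/N$ with the lattice norm $\|\pi x\|=q_T(x)$, and let $H=\widehat{G}$ be its norm completion, a Banach lattice. Define $R=i\circ\pi:E\to H$. This is a lattice homomorphism, and it is continuous because $q_T$ is $\tau$-continuous. Continuity of $q_T$ comes from continuity of $T$ and local solidity: if $x$ lies in a solid $\tau$-neighborhood $V$ with $\|T(V)\|\le 1$, then every $y$ with $|y|\le|x|$ also lies in $V$. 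Since $\|Tx\|\le q_T(x)$, $T$ vanishes on $N$ and induces $S_0:G\to X$ with $\|S_0\|\le1$; let $S$ be its continuous extension to $H$. Then $T=SR$. If $X$ is a Banach lattice and $T\ge0$, then $S_0\ge0$ on $G$, and by density and closedness of cones $S\ge0$.

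The core step is showing that $H$ is a KB-space. By Theorem \ref{th BP KR}, applied to the normed (hence metrizable) space $G$, it suffices to show that $G$ has the B-property. Take $0\le \pi x_n\uparrow$ that is $q_T$-bounded. Lifting, we may assume $0\le x_n\uparrow$ in $E$: replace $x_n$ by $\bigvee_{i\le n}x_i^+$, which has the same image because $\pi$ is a lattice homomorphism. Then $\|Tx_n\|\le q_T(x_n)$ is bounded, so SPIB gives that $(x_n)$ is $\tau$-bounded. As $T$ is gbwc, Theorem \ref{Th charc no conditions} shows $(Tx_n)$ converges, hence is Cauchy. By Lemma \ref{lem demiequiv}, for $n\ge m$,
\[
q_T(x_n-x_m)\le c\,\|T(x_n-x_m)\|,
\]
since $x_n-x_m\in E^+$. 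So $(\pi x_n)$ is Cauchy in $G$, which is the B-property.

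The remaining issue is that $R$ must be interval preserving. Here I expect the main obstacle. Interval preservation of $R$ amounts to $\pi[0,x]$ being dense, or equal, in $[0,Rx]_H$. The quotient map onto $G$ is interval preserving, since quotient maps by ideals are. The problem is passing to the completion $H$. I would first try to exploit the KB/Lebesgue structure of $H$: any $0\le h\le \pi x$ in $H$ is a norm limit of $g_k\in G$ that can be taken in $[0,\pi x]_G$ via $g_k\mapsto (g_k^+\wedge \pi x)$. Then increasing suprema of such $g_k$, using order continuity of $H$ and the Levi property, should place $h$ in the closure of $\pi[0,x]$. If exact preservation fails, I would settle for identifying $H$ with the band or ideal generated by $G$, replacing $H$ by the ideal of $H$ generated by $R(E)$ if necessary while keeping it KB.
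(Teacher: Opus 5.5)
Apart from interval preservation, your argument is correct and follows the paper's route exactly. That covers the seminorm $q_T$, the completion $H$ of $E/N$, and the B-property of $E/N$ via SPIB, Theorem \ref{Th charc no conditions} and Lemma \ref{lem demiequiv}, followed by Theorem \ref{th BP KR}. Your sequential proof of the converse is also fine, and it uses neither SPIB nor interval preservation.

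The gap is the one you flag, and neither of your repairs closes it. Density of $R[0,x]$ in $[0,Rx]_H$ is true. It follows directly from $g_k^+\wedge \pi x\in[0,\pi x]_{E/N}=\pi[0,x]$ and continuity of the lattice operations, with no Levi or order-continuity argument needed. But it only shows that $R$ is \emph{almost} interval preserving, and no order property of $H$ can turn closure into membership. Passing to the ideal $I$ of $H$ generated by $R(E)$ changes nothing. Any ideal containing $Rx$ satisfies $[0,Rx]_I=[0,Rx]_H$, and $I$ is in general not norm complete, so it is not a KB-space.

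In fact the gap cannot be closed, because the statement is false as written. The paper's proof only asserts that $R$ is ``clearly'' interval preserving. First, if $R\geq 0$ is interval preserving, then $R(E)$ is an ideal of $H$: from $|h|\le|Ry|\le R|y|$ we get $h^{\pm}\in[0,R|y|]=R[0,|y|]$. Second, a KB-space is Dedekind complete, and hence so is each of its ideals.

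Now take $E=(C[0,1],\Vert\cdot\Vert_1)$, $X=L^1[0,1]$, and $T$ the inclusion. Then $T$ is an isometry, so it is continuous and has SPIB. It is gbwc by Theorem \ref{Th charc no conditions}, since increasing $\Vert\cdot\Vert_1$-bounded sequences of positive continuous functions converge in $L^1$ by monotone convergence. In any factorization $T=SR$ as in the statement, $R$ is injective because $T$ is. So $E$ would be Riesz isomorphic to the ideal $R(E)$ of the Dedekind complete space $H$, whereas $C[0,1]$ is not even Dedekind $\sigma$-complete. Concretely, the construction gives $H=L^1[0,1]$ and $R$ the inclusion, and then $\chi_{[0,1/2]}\in[0,R\mathbf{1}]_H\setminus R[0,\mathbf{1}]$.

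What your argument, and the paper's, actually proves is the factorization with $R$ a continuous lattice homomorphism with dense range that is almost interval preserving. Exact interval preservation requires an extra hypothesis, for example that $E/N$ be an ideal in its completion.
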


\begin{proof}
Assume that $T$ is gbwc. Define on $E$ the Riesz seminorm $q_{T}$ of Lemma %
\ref{lem Rsemin}. Let $N=q_{T}^{-1}\left( 0\right) $ be the null ideal of $%
q_{T}$ and $E/N$ be the quotient space endowed with the Riesz norm $%
\left\Vert \overset{.}{x}\right\Vert =q_{T}\left( x\right) .$ Complete $E/N$
to the Banach lattice $H.$ By Lemma \ref{lem Rsemin}, $\left\Vert
Tx\right\Vert \leq \left\Vert \overset{.}{x}\right\Vert $ for every $x\in E.$
Hence, the operator $S_{1}:E/N\rightarrow X$ defined by $S_{1}\overset{.}{x}%
=Tx$ is a well defined continuous operator. Then, extend it to the
continuous operator $S:H\rightarrow X.$ Then, let $R:E\rightarrow H$ be
defined by $Rx=\overset{.}{x}.$ Clearly, $T=SR,$ $R$ is a continuous
interval preserving lattice homomorphism and $S$ is positive if $T$ is so.
To end the proof, we will show that $H$ is a KB-space. Let $\left(
x_{n}\right) \subset E$ be such that $0\leq \overset{.}{x_{n}}\uparrow $ and 
$\left( \overset{.}{x_{n}}\right) $ is norm-bounded in $E/N.$ Since the
canonical projection $E\rightarrow E/N$ is a lattice homomorphism, we may
assume that $0\leq x_{n}\uparrow .$ Since $\left\Vert Tx_{n}\right\Vert \leq
\left\Vert \overset{.}{x_{n}}\right\Vert $ for every $n,$ then $\left(
Tx_{n}\right) $ is norm-bounded in $X.$ Now, as $T$ satisfies SPIB then $%
\left( x_{n}\right) $ is topologically bounded in $E.$ Hence, as $T$ is
gbwc, it follows from Theorem \ref{Th charc no conditions} that $\left(
Tx_{n}\right) $ is norm-convergent in $X.$ It follows from Lemma \ref{lem
demiequiv} that $\left( \overset{.}{x_{n}}\right) $ is norm-Cauchy in $E/N.$
Now, Theorem \ref{th BP KR} allows us to reach the desired conclusion.

The proof of the converse is routine and, therefore, omitted.
\end{proof}

We conclude by some consequences of the above factorization theorem. Let us
recall that an operator $T:E\rightarrow X$ from a normed space into a Banach
space is said to be an embedding if there exists two constants $K,M>0$ such
that%
\begin{equation*}
K\left\Vert x\right\Vert \leq \left\Vert Tx\right\Vert \leq M\left\Vert
x\right\Vert
\end{equation*}%
for every $x\in E.$ If $E$ and $X$ are a normed Riesz space and a Banach
lattice, respectively, then $T$ is said a lattice embedding if it is a
lattice homomorphism embedding. It is now clear that embeddings of normed
Riesz spaces into a Banach space satisfy SPIB. Hence, we get the following
consequence.

\begin{corollary}
An embedding $T:E\rightarrow X$ of a normed Riesz space into a Banach space
is gbwc iff it admits a factorization $SR$ through a KB-space $H$ with a
continuous interval preserving lattice homomorphism factor $R:E\rightarrow H$
and a continuous factor $S:H\rightarrow X.$ If furthermore $X$ is a Banach
lattice and $T$ is a lattice embedding, then $S$ is positive.
\end{corollary}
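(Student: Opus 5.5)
The plan is to reduce the corollary to Theorem \ref{th fact SPIB}. For that I need to check its hypotheses for an embedding $T:E\rightarrow X$ of a normed Riesz space (with its norm topology, which is locally convex-solid) into a Banach space: $T$ must be continuous and must have SPIB.

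Continuity follows from the right-hand inequality $\left\Vert Tx\right\Vert \leq M\left\Vert x\right\Vert$.

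For SPIB, take a sequence $\left( x_{n}\right) \subset E^{+}$ such that $\left( Tx_{n}\right)$ is norm bounded in $X$. The left-hand inequality gives $\left\Vert x_{n}\right\Vert \leq K^{-1}\left\Vert Tx_{n}\right\Vert$, so $\left( x_{n}\right)$ is norm bounded in $E$. In a normed space, topological boundedness is norm boundedness, so $T$ has SPIB, as the text preceding the corollary already observes. Theorem \ref{th fact SPIB} then gives the equivalence between $T$ being gbwc and the existence of a factorization $T=SR$ through a KB-space $H$, with $R$ a continuous interval preserving lattice homomorphism and $S$ continuous.

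For the last assertion, assume that $X$ is a Banach lattice and that $T$ is a lattice embedding. Then $T$ is a lattice homomorphism and hence positive, since $Tx=T\left( x^{+}\right) =\left( Tx\right) ^{+}\geq 0$ for $x\geq 0$. The final clause of Theorem \ref{th fact SPIB} then gives that $S$ is positive.

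To see why positivity of $S$ holds in the construction, note that $S$ is the continuous extension of $S_{1}(\dot{x})=Tx$ from $E/N$ to its completion $H$. The operator $S_{1}$ is positive because positive elements of $E/N$ are classes of positive elements of $E$. Positivity then passes to $S$ because the positive cone of $E/N$ is norm dense in $H^{+}$ (approximate $h\in H^{+}$ by $u_{k}\in E/N$, so that $u_{k}^{+}\rightarrow h^{+}=h$) and $X^{+}$ is closed.

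There is no real obstacle here. The only points needing care are that Theorem \ref{th fact SPIB} is applied with the norm topology, and that the converse direction, which the theorem states only as routine, is inherited verbatim.
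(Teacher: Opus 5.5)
Your proposal is correct and follows the paper's route exactly. Like the paper, you check that an embedding is continuous and has SPIB, apply Theorem \ref{th fact SPIB}, and use that a lattice embedding is a lattice homomorphism and hence positive to get the final clause. Your density argument for the positivity of $S$ spells out a step that the paper's proof of that theorem simply calls clear.
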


Finally, we have also the following consequence.

\begin{corollary}
Let $T:E\rightarrow X$ be a continuous gbwc operator from a locally
convex-solid Riesz space into a Banach space such that $T$ has SPIB. Then, $%
T(x_{n})\rightarrow 0$ in norm for every topologically $b$-order bounded
sequence $(x_{n})$ in $E^{+}$ with $x_{n}\rightarrow 0$ for $\sigma \left(
E,E^{\prime }\right) .$
\end{corollary}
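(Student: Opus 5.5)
We use the factorization of Theorem \ref{th fact SPIB} to reduce the claim to the known Banach lattice fact that a $b$-weakly compact operator on a KB-space sends weakly null positive $b$-order bounded sequences to norm null ones. Since $T$ is continuous, gbwc and has SPIB, we write $T=SR$. Here $H$ is a KB-space (the completion of $E/N$ under $q_{T}$), $R:E\rightarrow H$ is a continuous interval preserving lattice homomorphism, and $S:H\rightarrow X$ is continuous. Let $(x_{n})\subset E^{+}$ be topologically $b$-order bounded with $x_{n}\rightarrow 0$ for $\sigma(E,E^{\prime})$.

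First, $(Rx_{n})\subset H^{+}$ is weakly null in $H$: for $h^{\prime}\in H^{\prime}$ we have $h^{\prime}\circ R\in E^{\prime}$, because $R$ is continuous. Second, $(Rx_{n})$ is order bounded in $H$. By \cite[Proposition 3.8]{Hac} the set $\{x_{n}\}$ is topologically bounded. Putting $y_{n}=\vee_{i\leq n}x_{i}$, these suprema remain topologically $b$-order bounded, since $\{x_{n}\}$ lies in some $[0,x^{\prime\prime}]$ of $E^{\prime\prime}$, and hence they are topologically bounded. Thus $0\leq Ry_{n}\uparrow$, and $(Ry_{n})$ is norm bounded in $H$ by continuity of $R$. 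As $H$ is a KB-space, $Ry_{n}$ converges to some $h\in H^{+}$, and since positive cones are closed, $0\leq Rx_{n}\leq Ry_{n}\leq h$.

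It remains to show that $\Vert SRx_{n}\Vert\rightarrow 0$. The norm of $H$ is order continuous, $H$ being a KB-space, so $[0,h]$ is weakly compact. The ideal $I_{h}$ with the norm $\Vert\cdot\Vert_{h}$ is an AM-space with unit, and the restriction of $S$ to it is weakly compact, because it maps $[-h,h]$ into a weakly compact set. A weakly compact operator on a $C(K)$-space is completely continuous (Dunford--Pettis property), so it suffices that $Rx_{n}\rightarrow 0$ weakly in $I_{h}$. This is the main obstacle, since weak nullness in $H$ need not a priori give weak nullness in $(I_{h},\Vert\cdot\Vert_{h})$. We would get it as follows. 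In the order continuous space $H$, the dual of $I_{h}$ is identified with the band generated by restrictions of elements of $H^{\prime}$: positive functionals on $I_{h}$ restricted to $[0,h]$ are order continuous, and $[0,h]$ is weakly compact so its weak topology coincides with $\sigma(I_{h},H^{\prime})$ there. On the positive bounded set $[0,h]$, weak nullness with respect to $H^{\prime}$ then gives weak nullness with respect to $I_{h}^{\prime}$.

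Alternatively, we can invoke \cite[Theorem 5.57/5.58]{POS.OPR}. Since $S$ is continuous on a KB-space, $S$ is $b$-weakly compact, hence order weakly compact, and order weakly compact operators map weakly null order bounded disjoint, and by the standard Dedekind--Meyer-Nieberg argument weakly null positive order bounded, sequences to norm null ones. Then $T(x_{n})=S(Rx_{n})\rightarrow 0$ in norm. We would try this route first, checking the precise form of the cited result, namely that for owc $S$ and order bounded weakly null $(z_{n})$ one has $\Vert Sz_{n}\Vert\rightarrow 0$.
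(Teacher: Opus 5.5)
Up to the last step you follow the paper. You factor $T=SR$ by Theorem \ref{th fact SPIB}, note that $(Rx_n)$ is positive and weakly null in $H$, and get order boundedness in $H$ from the KB property. Your argument with $y_n=\vee_{i\le n}x_i$ is a correct and more explicit version of the paper's remark that $(Rx_n)$ is $b$-order bounded, hence order bounded in the KB-space $H$. Your second route for the final step, order weak compactness of $S$ combined with \cite[Theorem 5.57]{POS.OPR}, is exactly how the paper concludes, and it works.

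The AM-space/Dunford--Pettis route you describe first does not work, because its key claim is false. Positive functionals on $(I_h,\|\cdot\|_h)$ need not be order continuous. Moreover, weak nullness in $H$ of a sequence in $[0,h]$ does not imply weak nullness in $(I_h,\|\cdot\|_h)$. Take $H=L_1[0,1]$, $h=\mathbf{1}$, so $I_h=L_\infty[0,1]$, and $z_n=\mathbf{1}_{[0,1/n]}$. This sequence is weakly null in $L_1$ and $z_n\downarrow 0$. Now extend $f\mapsto\lim_{t\to0^+}\frac{1}{t}\int_0^t f$ (where the limit exists) by Hahn--Banach, dominated by the corresponding $\limsup$. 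This gives a positive $\phi\in L_\infty'$ with $\phi(z_n)=1$ for all $n$. So complete continuity of $S|_{I_h}$ cannot be invoked, and weak compactness of $[0,h]$ in $H$ does not help, since $[0,h]$ is not $\sigma(I_h,I_h')$-compact.

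When you restate the result you need, you also drop positivity. The claim that $S$ owc and $(z_n)$ order bounded and weakly null give $\|Sz_n\|\to0$ is false: take the identity on $L_2[0,1]$ and the Rademacher functions. It is true for positive $(z_n)$, which is your situation.

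Positivity in fact gives the shortest finish. In a Banach lattice with order continuous norm, a positive, order bounded, weakly null sequence is already norm null. Hence $\|Rx_n\|\to0$ and $\|Tx_n\|\le\|S\|\,\|Rx_n\|\to0$, using nothing about $S$ beyond continuity.
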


\begin{proof}
By Theorem \ref{th fact SPIB}, $T$ admits a factorization $SR$ through a
KB-space $H$ with a continuous interval preserving lattice homomorphism
factor $R:E\rightarrow H$ and a continuous factor $S:H\rightarrow X.$ Let $%
(x_{n})$ be topologically $b$-order bounded in $E^{+}$ with $%
x_{n}\rightarrow 0$ for $\sigma \left( E,E^{\prime }\right) .$ Since $R$ is
positive and continuous, then $\left( Rx_{n}\right) $ is $b$-order bounded
and weakly null in $H.$ Since $H$ is a KB space, $\left( Rx_{n}\right) $ is
also order bounded in $H$. Combining \cite[Theorem 5.57 (3)]{POS.OPR} with $%
S $ being order weakly compact, we obtain that $Tx_{n}=SRx_{n}\rightarrow 0$
in norm, as desired.
\end{proof}

\end{document}